\newcommand{\sym}{\mathcal{S}}
\newcommand{\bob}[1]{m_{#1}}
\newcommand{\rank}{\ell} 
\newcommand{\srank}{\minmax} 
\newcommand{\myadd}{\Phi}
\newcommand{\addi}{\textsf{Add1}}
\newcommand{\addii}{\textsf{Add2}}
\newcommand{\addiii}{\textsf{Add3}}
\newcommand{\MM}{\mathcal{M}}
\newcommand{\NN}{{\mathbb{N}}} 
\newcommand{\R}{\mathcal{R}}      
\newcommand{\T}{\mathcal{T}}      
\newcommand{\Matchings}{\mathsf{Match}}
\newcommand{\Z}{\Matchings}
\newcommand{\Phip}{\Upsilon}
\newcommand{\Psip}{\Omega}
\newcommand{\twomatrix}{\left(\begin{array}{cc}}
\newcommand{\threematrix}{\left(\begin{array}{ccc}}
\newcommand{\fourmatrix}{\left(\begin{array}{cccc}}
\newcommand{\ematrix}{\end{array}\right)}
\newcommand{\Aseqs}[1]{{\mathsf{Asc}}_{#1}} 
\newcommand{\PAseqs}[2]{{\mathsf{Asc}}_{#1}^{(#2)}} 
\newcommand{\Poset}{\mathsf{P}} 
\newcommand{\Posetnk}[2]{\Poset_{#1}^{(#2)}} 
\newcommand{\MC}[2]{\mathsf{M}_{#1}^{(#2)}}
\newcommand{\M}[1]{\mathsf{M}_{#1}}
\newcommand{\ourmap}{\zeta}
\newcommand{\ourindex}{\mathsf{index}}
\newcommand{\ourvalue}{\mathsf{value}}
\newcommand{\ouradd}[1]{\phi(#1)}
\newcommand{\bcdkmap}{\mathfrak{B}}
\DeclareMathOperator{\st}{std}
\DeclareMathOperator{\yell}{m}
\DeclareMathOperator{\ourextra}{\mathsf{extra}}
\theoremstyle{plain}
\newtheorem{theorem}{Theorem}
\newtheorem{definition}[theorem]{Definition}
\newtheorem{lemma}[theorem]{Lemma}
\newtheorem{proposition}[theorem]{Proposition}
\newtheorem{corollary}[theorem]{Corollary}
\theoremstyle{definition}
\newtheorem{example}[theorem]{Example}
\newcommand{\ds}{\displaystyle}
\newcommand{\tpt}{$(\mathbf{2+2})$}
\newcommand{\tptp}{$\mathbf{2+2}$}
\DeclareMathOperator{\maxindist}{\mathrm{maxindist}}
\DeclareMathOperator{\rep}{\mathrm{rep}}
\DeclareMathOperator{\adjpairs}{\mathrm{epairs}}
\DeclareMathOperator{\adjdes}{\mathrm{adjdes}}
\DeclareMathOperator{\echords}{\mathrm{echords}}
\DeclareMathOperator{\cruns}{\mathrm{cruns}}
\DeclareMathOperator{\larcs}{\mathrm{larcs}}
\DeclareMathOperator{\asc}{\mathrm{asc}}
\DeclareMathOperator{\minmax}{\mathrm{minmax}}
\DeclareMathOperator{\levels}{\mathrm{levels}}
\DeclareMathOperator{\last}{\mathrm{last}}
\newcommand{\size}[1]{|#1|}
\begin{document}
\title{Enumerating \tpt-free posets by indistinguishable elements}

\author[M.\ Dukes]{Mark Dukes}
\address{Department of Computer and Information Sciences, 
	University of Strathclyde, Glasgow, G1 1XH, UK
}
\email{mark.dukes@cis.strath.ac.uk, einar.steingrimsson@cis.strath.ac.uk}

\author[S.\ Kitaev]{Sergey Kitaev} \address{
  School of Computer Science, Reykjav\'{i}k University, 101
  Reykjav\'{i}k, Iceland, and Department of Computer and Information Sciences,
        University of Strathclyde, Glasgow, G1 1XH, UK} \email{sergey@ru.is}

\thanks{MD, SK \& ES: The work presented here was supported by grant
  no.\ 090038012 from the Icelandic Research Fund.}

\author[J.\ Remmel]{Jeffrey Remmel}
\address{Department of Mathematics,
University of California, San Diego,
La Jolla, CA 92093-0112,  USA}
\email{remmel@math.ucsd.edu}
\thanks{JR: Partially supported by NSF grant DMS 0654060.}
\author[E.\ Steingr\'{\i}msson]{Einar Steingr\'{\i}msson}

\begin{abstract}
A  poset is said to be \tpt-free if it does not contain an
induced subposet that is isomorphic to {\tptp}, the union of two
disjoint 2-element chains.  Two elements in a poset are
indistinguishable if they have the same strict up-set and the same strict
down-set.  Being indistinguishable defines an equivalence relation on
the elements of the poset.  We introduce the statistic $\maxindist$,
the maximum size of a set of indistinguishable elements.  
We show that, under a bijection of Bousquet-M\'elou et
al.\ \cite{BCDK}, indistinguishable elements correspond to letters
that belong to the same run in the so-called ascent sequence
corresponding to the poset. We derive the generating function for the
number of \tpt-free posets with respect to both $\maxindist$ and the
number of different strict down-sets of elements in the poset.
Moreover, we show that \tpt-free posets $P$ with $\maxindist(P)$
at most $k$ 
are in bijection with upper triangular matrices of nonnegative
integers not exceeding $k$, where each row and each column contains a
nonzero entry.  (Here we consider isomorphic posets to be equal.) In
particular, \tpt-free posets $P$ on $n$ elements with $\maxindist(P) =
1$ correspond to upper triangular binary matrices where each row and
column contains a nonzero entry, and whose entries sum to $n$.  We
derive a generating function counting such matrices, which confirms a
conjecture of Jovovic~\cite{J}, and we refine the generating function
to count upper triangular matrices consisting of nonnegative integers
not exceeding $k$ and having a nonzero entry in each row and column.
That refined generating function also enumerates \tpt-free posets
according to $\maxindist$. Finally, we link our enumerative results 
to certain restricted permutations and matrices.
\end{abstract}

\maketitle
\thispagestyle{empty}

\section{Introduction}

This paper continues the study of enumerative properties of three
distinct equinumerous classes of combinatorial objects, namely,
\tpt-free posets (also known as {\it{interval orders}}, see 
Fishburn~\cite{FISH_BOOK}), ascent sequences, and upper triangular matrices with
nonnegative integer entries and where each row and column contains a
nonzero entry.  We build on the work of Bousquet-M\'elou et
al.~\cite{BCDK}, who presented a bijection between \tpt-free posets
and ascent sequences, and that of Dukes and Parviainen \cite{dp}, who
gave a bijection between ascent sequences and upper triangular
matrices with nonnegative integer entries and no rows or columns of
all zeros.

It is important to note that, as in \cite{BCDK}, we consider, and
count, \tpt-free posets up to isomorphism.  That is, we consider two
such posets to be equal if there is an order preserving bijection
between them.  In \cite{BCDK} the isomorphism classes are referred to
as ``unlabeled posets''.

The central result of this paper is the determination of the
generating function for the number of ascent sequences of length $n$
with $k$ pairs of consecutive elements that are equal.  We call an
ascent sequence with no two consecutive equal entries a {\em primitive
  ascent sequence}.  A special case gives the generating function for
the number of primitive ascent sequences.  We show that under the
bijections mentioned above, primitive ascent sequences correspond to
{\em primitive} \tpt-free posets, defined by having no pair of
elements with the same strict down-sets and the same strict up-sets, and
also to upper triangular binary matrices with no rows or columns of
zeros.  This allows us to prove a conjecture of Jovovic \cite{J} which
states that the generating function for the number of upper triangular
binary matrices with no rows or columns of zeros is given by
\begin{equation}\label{con} K(t) = \sum_{n\geq 0}
\prod_{i=1}^n \left( 1 - \frac{1}{(1+t)^i} \right).
\end{equation}

In order to state our results more precisely, we now introduce the
three main classes of combinatorial structures treated in the paper,
namely, ascent sequences, \tpt-free posets, and upper triangular
matrices with nonnegative integer entries and no rows or columns of
all zeros.

\subsection{Ascent sequences}

An \emph{ascent} in an integer sequence $(x_1,\dots , x_i)$, is a $j$
such that $x_j<x_{j+1}$.  The number of ascents in such a sequence $X$
is denoted $\asc(X)$.

A sequence $(x_1,\dots , x_n ) \in \mathbb{N}^n$ is an {\em{ascent
    sequence of length $n$}} if and only if it satisfies $x_1=0$ and
$$ 0\le x_i \le 1+\asc(x_1,\dots , x_{i-1}) $$
whenever $2\leq i \leq n$.

Let $\Aseqs{n}$ be the collection of ascent sequences of length $n$
and let $\Aseqs{}$ be the collection of all ascent sequences,
including the empty ascent sequence.  If $a \in \Aseqs{n}$ then we
will write $|a|=n$.  For example, $(0,1,0,2,3,1,0,0,2)$ is an ascent
sequence in $\Aseqs{9}$.

A {\em run} in an ascent sequence is a maximal subsequence of
consecutive letters that are all equal.  Let $\PAseqs{}{k}$ be the
collection of ascent sequences whose runs have length at most $k$, and
let $\PAseqs{n}{k}$ be those $a \in \PAseqs{}{k}$ that have $|a|=n$.
A {\em primitive ascent sequence} is an ascent sequence with no runs
of length greater than 1.  Thus, $\PAseqs{n}{1}$ is the set of all
primitive ascent sequences.

Given $a=(a_1,\ldots , a_n) \in \Aseqs{n}$, we call a pair
$(a_i,a_{i+1})$ with $a_i=a_{i+1}$ an {\em equal pair} of the
sequence\footnote{This is sometimes called a {\em{level}} in the
  literature on sequences, not to be confused with the definition of
  level in the present paper.}.  We denote the number of equal pairs
in a sequence $a$ by $\adjpairs(a)$.  For example $\adjpairs
(0,0,0,0,0,1,1,2,1,1)=6$ since
$(a_1,a_2)=(a_2,a_3)=(a_3,a_4)=(a_4,a_5)=(0,0)$ and
$(a_6,a_7)=(a_9,a_{10})=(1,1)$.

\subsection{\tpt-free posets}
Recall that we consider two posets to be equal if they are isomorphic.
A poset is said to be \emph{{\tpt}-free} if it does not
contain an induced subposet that is isomorphic to {\tptp}, the union
of two disjoint 2-element chains.  We let $\Poset$ denote the set of
\tpt-free posets (including the empty poset) and $\Poset_n$ the set of
all such posets on $n$ elements.  For $P \in \Poset$, let $\size{P}$
be the number of elements in $P$.

An important characterization (see \cite{FISH_BOOK,FISH_OPER,
  SKANDERA}) says that a poset is \tpt-free if and only if its strict
down-sets can be ordered linearly by inclusion.  
For a poset ${P} = (P,\prec _p)$ and $x \in P$, the strict
down-set of $x$, denoted $D(x)$, is the set of all $y \in P$
such that $y \prec_p x$.  
Clearly, any poset is uniquely specified by listing the collection of
strict down-sets of each element.  The {\em trivial down-set} is the
empty set.  Thus if ${P}$ is a {\tpt}-free poset, we can write $D(P)=
\{D(x):x \in P\}$ as
$$D(P) =(D_0,D_1, \ldots ,D_{k})$$ where $\emptyset = D_0 \subset D_1
\subset \cdots \subset D_{k}$.  We then say that $x \in P$ is at
        {\it{level}} $i$ if $D(x) = D_i=D_i(P)$ and write $\rank(x) =
        i$.
We also define $\levels(P)$ by setting $\levels(P)=k$, where $k$ is
the index of the highest level in $P$.

We
  denote by $L_i(P) = \{\, x\in P : \rank(x)=i \,\}$ the set of all
  elements at level $i$ and we set
$$L(P) = \big(\,L_0(P),L_1(P),\dots,L_{\levels(P)}(P)\,\big).$$ 
Let $\bob{P}$ be a maximal element of $P$
  whose strict down-set is the smallest of the strict down-sets of $P$'s
  elements.  This element may not be unique but all such elements
  belong to the same level. We define $\srank(P)$ by setting
  $\srank(P)=\rank(\bob{P})$.  
The maximal elements of $P$ are $P\backslash D_{\levels(P)}(P)$.
Thus $\minmax(P)=\min\{ \rank(x) ~:~ x \in P\backslash D_{\levels(P)}(P) \}$.

As a counterpart to the strict down-set $D(x)$ of an element $x$ in a poset,
we let $U(x)$ denote the {\em strict up-set} of $x$, that is,
$U(x)=\{\, y : x\prec_p y \,\}$.  
Given $P \in \Poset_n$, we say that two
elements $x,y \in P$ are {\em indistinguishable} if $D(x)=D(y)$ and
$U(x)=U(y)$. 
We write this as $x \sim_P y$ and note that $\sim_P$ is an equivalence relation on $P$. 
Let us define $\maxindist(P)$ to be the size of the largest equivalence class in $P$.

For example, the elements $c$ and $d$ in the poset of
Example~\ref{ex1} below, as well as the
elements $2$ and $3$ in the poset of Example~\ref{ex2}, are
indistinguishable.  We say that a \tpt-free poset is {\em primitive}
if it contains no pair of indistinguishable elements.  

We let
$\Posetnk{n}{k}$ denote the set of all \tpt-free posets $P$ on $n$ elements 
for which $\maxindist(P)$ is at most $k$.
In particular, $\Posetnk{n}{1}$ 
is the set of primitive \tpt-free posets on $n$ elements.  We define
the statistic $\rep(P)$ for $P\in\Poset$ to be the minimum number of
elements that need to be removed to create a primitive poset.  For
example, the value of this statistic is 1 on the posets in both
Examples~\ref{ex1} and~\ref{ex2}.  Note that $\maxindist(P)=1$ if and
only if $\rep(P)=0$.

	\begin{example}\label{ex1}
	Let $P$ be the following {\tpt}-free poset (reproduced
	from~\cite{BCDK} with kind permission of the authors):
	
	$$
	\begin{minipage}{8.6em}
  	\includegraphics[scale=0.85]{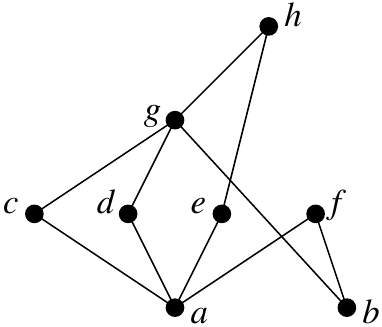}
	\end{minipage} 
	\kern10mm =\kern10mm
	\begin{minipage}{10em}
  	\scalebox{0.70}{\includegraphics{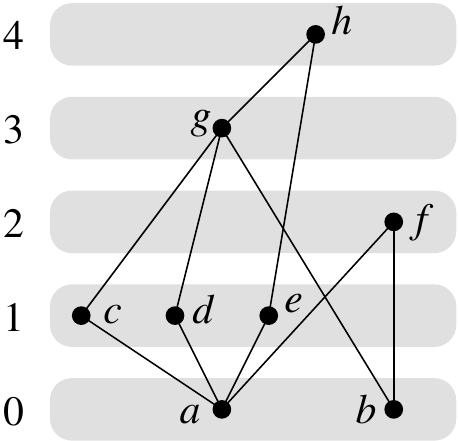}}
	\end{minipage}
	$$ 
	On the right the poset has been redrawn to show the level numbers
	determined by the strict down-set of each element (when compared to the
	strict down-sets of other elements).  
	Notice that $\levels(P)=4$. The strict down-set, level, and strict up-set of each element is as follows:
	$$
	\footnotesize{
	\begin{array}{|c||c|c|c|c|c|c|c|c|} \hline
	x & a & b & c & d & e & f & g & h \\ \hline\hline
	D(x) & \emptyset & \emptyset & \{a\} & \{a\} & \{a\} & \{a,b\} & \{a,b,c,d\} & \{a,b,c,d,e,g\} \\ \hline
	\rank(x) & 0 & 0 & 1&1&1&2&3&4\\ \hline
	U(x) & \{c,d,e,f,g,h\} & \{f,g,h\} & \{g,h\} & \{g,h\} & \{h\} & \emptyset & \{h\} & \emptyset \\ \hline
	\end{array}
	}
	$$
	We therefore have
	${D(a) = D(b)}\subset {D(c) = D(d) = D(e)}\subset{D(f)}\subset {D(g)}\subset D(h)$.
	The strict down-sets for each level are listed along with the elements of each level:
	$$\begin{array}{|c||c|c|c|c|c|} \hline 
	i & 0 & 1 & 2 & 3 & 4 \\ \hline \hline
	D_i(P) & \emptyset & \{a\} & \{a,b\} & \{a,b,c,d\} & \{a,b,c,d,e,g\} \\  \hline
	L_i(P) & \{a,b\} & \{c,d,e\} & \{f\} & \{g\} & \{h\} \\ \hline
	\end{array}
	$$
	\noindent
	The maximal elements of $P$ are $P\backslash D_4(P)=\{f,h\}$.
        Since $D(f) \subset D(h)$ we have $\bob{P}=f$ and
        $\srank(P)=2$.  
	\end{example}

\subsection{Upper triangular matrices}

Let $\M{n}$ be the set of upper triangular matrices of nonnegative
integers such that
no row or column contains all zero entries, and
the sum of the entries is $n$.
Let $\M{}$ be the set of all such matrices, that is,
$\M{}=\bigcup_{n\ge0}\M{n}$.  For example, $\M{3}$ consists of the
following 5 matrices:
$$(3) ,
        \twomatrix 2 & 0 \\ 0 & 1 \ematrix ,
        \twomatrix 1 & 1 \\ 0 & 1 \ematrix ,
        \twomatrix 1 & 0 \\ 0 & 2 \ematrix ,
        \threematrix 1 & 0 & 0 \\ 0 & 1 & 0 \\ 0 & 0 & 1 \ematrix .
$$

For any $A \in \M{}$, we let $|A|$ be the sum of the entries in $A$,
and we set $\ourextra(A)=|A|-\mathrm{NZ}(A)$, where $\mathrm{NZ}(A)$
is the number of nonzero entries in $A$. Also, let $\ourindex(A)$ be
the smallest value of $i$ such that $A_{i,\dim(A)}$ is nonzero, where
$\dim(A)$ is the number of rows (or columns) in $A$.  As an example,
let $$A=\fourmatrix 1&3&0&0 \\ 0&0&2&0 \\ 0&0&0&5 \\ 0&0&0&2
\ematrix.$$ Then $|A|=1+3+2+5+2=13$, $\mathrm{NZ}(A)=5$,
$\ourextra(A)=13-5=8$, and $\ourindex(A)=3$ since the topmost non-zero
entry in the final column is in the third row.  Let $\MC{n}{k}$ be the
collection of matrices in $\M{n}$ that have no entries exceeding $k$.
In particular, $\MC{n}{1}$ is the set of binary matrices in $\M{n}$.

\subsection{Enumerative results}
Let $p_n$ be the number of \tpt-free posets on $n$ elements.
Bousquet-M\'elou et al.~\cite{BCDK} showed that the generating
function for the number $p_n$ of \tpt-free posets on $n$ elements is

\begin{equation}\label{gf}
P(t)= \sum_{n\geq 0} p_n \, t^n=\sum_{n\ge 0} \prod_{i=1}^{n} \left(
1-(1-t)^i\right).  
\end{equation}

A more general power series $F(t,u,v)$ that takes into account the
statistics {\it{number of levels}} and {\it{level number of the lowest
    maximal element}} is implied by inserting the power series given
in \cite[Proposition 15]{BCDK} into \cite[Lemma 14]{BCDK}.  See
\cite[Section 6]{BCDK} for an overview of these generating functions.
More recently, Kitaev and Remmel \cite{KR} generalized the result of
\cite[Section 6]{BCDK} to derive a generating function that
incorporated two further statistics related to \tpt-free posets.

\subsection{Statements of main results}
In this paper we study the generating function 
\begin{eqnarray*}
G(u,v,y,t) 
&=& \sum_{P \in \Poset} u^{\levels(P)} v^{\minmax(P)} y^{\rep(P)} t^{\size{P}}.
\end{eqnarray*}
Using the bijections of Bousquet-M\'elou et al.~\cite{BCDK} and Dukes
and Parviainen \cite{dp}, respectively, 
this is also the generating function of several statistics 
on ascent sequences and matrices. (This is made clear at the beginning of Section 5.)

We show that $H(u,v,y,t)=G(u,v,y,t)-1$ satisfies the following recurrence:
\begin{equation}\label{Intro1}
H(u,v,y,t)(v-1 -t-tyv +ty+tuv) = t(v-1)- tH(u,1,y,t) + tuv^2
H(uv,1,y,t).
\end{equation}
Using the kernel method, we then show that
\begin{equation}\label{Intro2}
G(u,1,y,t) = 1+\frac{t(1-u)}{\Delta_1} + \sum_{n=1}^{\infty}
  \frac{t(1-u)(1-ty)^n(1+t-ty)^n \prod_{i=1}^n
    \Gamma_i}{\Delta_n\Delta_{n+1}},
\end{equation}
where $\Delta_k =(1-ty)^k(1-u) + u(1+t-ty)^k$ and $ \Gamma_k =
  ({u(1+t-ty)^k})/{\Delta_k}$.
We can then use (\ref{Intro1}) and (\ref{Intro2}) to give
an explicit formula for $G(u,v,y,t)$.

We also show that the generating function for primitive \tpt-free
posets is given by
\begin{equation}\label{introcon} K(t) = \sum_{n\geq 0}
\prod_{i=1}^n \left( 1 - \frac{1}{(1+t)^i} \right)\end{equation} which
confirms a conjecture of Jovovic~\cite{J}.  Primitive \tpt-free posets
are of special interest as one can easily generate all \tpt-free
posets from the primitive ones by specifying the number of copies of
each element.  

Finally, we show that \tpt-free posets for which the $\maxindist$
statistic is at most $k$ correspond to ascent sequences with runs of
length at most $k$, and to upper-triangular matrices with entries not
exceeding $k$.  This allows us to generalize formula~(\ref{introcon})
to prove that
$$\sum_{n\geq 0} |\Posetnk{n}{k}|x^n=\sum_{n\geq 0}
|\MC{n}{k}|x^n=\sum_{n\geq 0} |\PAseqs{n}{k}|x^n=\sum_{n\geq 0}
\prod_{i=1}^n \left(1-\left( \dfrac{1-x}{1-x^k}
  \right)^i\right).$$

\subsection{Outline of the paper} In Section~\ref{decomp} we recall
the bijection of Bousquet-M\'elou et al.~\cite{BCDK} between \tpt-free
posets and ascent sequences.  In Subsection~\ref{enumeration} we show
that $|\PAseqs{n}{k}|=|\Posetnk{n}{k}|=|\MC{n}{k}|$.  In
Section~\ref{conj-solved} we derive the generating function for
primitive ascent sequences and for ascent sequences with runs of
length at most~$k$.  In Section~\ref{enum} we derive our formula for
$G(u,v,y,t)$ and discuss its specialization $G(u,1,0,t)$ corresponding
to primitive ascent sequences.  Finally, in Section~\ref{last_sec}, we
show that restricting ascent sequences by bounding the run-length
corresponds, via the bijection in~\cite{BCDK}, to bounding the length
of a sequence of consecutive descents on the restricted permutations
in~\cite[Sect. 2]{BCDK}.  We also show that a similar statement holds
for the relationship between ascent sequences and Stoimenow matchings.

\section{{\tpt}-free posets, ascent sequences and matrices}\label{decomp}
\subsection{Constructing {\tpt}-free posets from ascent sequences}

In this subsection we review the essential parts of \cite[Section
  3]{BCDK} with proofs omitted.  We describe a bijective map
$\bcdkmap$ from the collection of ascent sequences of length $n$ to
the collection of (canonically labeled) {\tpt}-free posets on $n$
elements. The mapping is a step by step procedure which constructs a poset
element by element.
One always starts with the single poset having one element labeled `1'.
The $j$th element of the poset to be inserted is labeled `$j$'.

Central to the construction are the three addition rules: {\addi},
{\addii} and {\addiii}.  Given a poset $P \in \Poset_m$, and a value
$i \in [0,1+\levels(P)]$, we produce a poset $\myadd(P,i) \in
\Poset_{m+1}$ where the new poset element, regardless of its position,
has label $m+1$.  The appropriate addition rule to use depends on
whether $i \in [0,\minmax(P)]$, $i=1+\levels(P)$ or $i \in
[\minmax(P)+1, \levels(P)]$.

Since a {\tpt}-free poset $P$ is uniquely determined by the pair
$\big(D(P),L(P)\big)$, in defining the addition 
operations below it suffices to only specify how $D(P)$ and
$L(P)$ change.  
Note that {\addi} leaves $\levels(P)$ unchanged, whereas {\addii} and {\addiii} 
increase $\levels(P)$ by one.

Given $P \in \Poset_n$, let us write $D_i=D_i(P)$ and $L_i=L_i(P)$.
Given a value $i$ with $0\leq i \leq 1+\levels(P)$, let $\myadd(P,i)$ be
the poset $Q$ obtained from $P$ in the following way:
\begin{enumerate}
\item[(\addi)] If $0\leq i \leq \srank(P)$ then set $D(Q)=D(P)$ and
$$L(Q)=(L_0,\ldots , L_i\cup \{n+1\} ,\ldots , L_{\levels(P)} ).$$
\item[(\addii)] If $i=1+\levels(P)$ then set
\begin{eqnarray*}
D(Q)&=&(D_0,\ldots , D_{\levels(P)},P) \\
L(Q)&=&(L_1,\ldots , L_{\levels(P)},\{n+1\}).
\end{eqnarray*}
\item[(\addiii)] If $\srank(P)<i< 1+\levels(P)$ then set
\begin{eqnarray*}
\MM &=& L_0 \cup \cdots \cup L_{i-1}\backslash D_{\levels(P)} \\
D(Q)&=& (D_0,\ldots , D_i, D_i \cup \MM,\ldots , D_{\levels(P)} \cup \MM)\\
L(Q)&=& (L_0,\ldots , L_{i-1},\{n+1\},L_i,\ldots ,L_{\levels(P)}).
\end{eqnarray*}
\end{enumerate}

An important property of the above addition operations is that
$\srank(\myadd(P,i))$ $=$ $i$, since all maximal elements below level $i$
are covered and therefore not maximal in $\myadd(P,i)$.  
Note that the single poset $P \in P^{(1)}$ is such that
$D(P)=(\emptyset)$, $L(P)=(\{1\})$ and $\levels(P)=0$.

\begin{definition}
Given $x=(x_1,\ldots , x_n) \in \Aseqs{n}$, 
let $\bcdkmap(x)=P^{(n)}$ where
$P^{(m)} := \myadd(P^{(m-1)},x_{m})$ for all $1<m\leq n$.
\end{definition}

\begin{example}\label{example}\label{ex2}
In this example we construct the poset $P=\bcdkmap(x)$ where $x=(0,1,1,0,2,0,1)\in\Aseqs{7}$.
We begin from the poset $P^{(1)}$ with just a single element, and successively construct $P^{(2)},\ldots ,P^{(7)}=P$
according to the addition rules.
	The poset $P^{(1)}$ is the poset with one element labeled '1'.
        This element is the only element at level 0 of $P^{(1)}$,
        illustrated as follows:
\ \\[1.5em]
\centerline{  	\scalebox{0.70}{\includegraphics{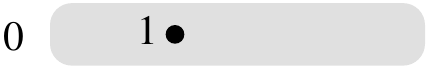}}}
\ \\[1em]
\begin{minipage}{23em}
	Since $1=1+\levels(P^{(1)})$, the poset $P^{(2)}=\myadd(P^{(1)},1)$ is constructed by applying rule {\addii}.
	The new element is labeled `2':
\end{minipage}
$\hspace*{1em}$
\begin{minipage}{11.5em}
	$$\mapsto\quad
	\begin{minipage}{8.5em}
  	\scalebox{0.70}{\includegraphics{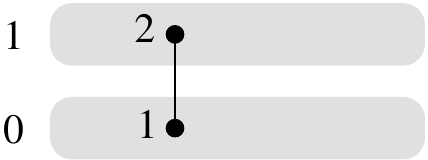}}
	\end{minipage}
	$$ 
\end{minipage}
\ \\[1.5em]
\begin{minipage}{23em}
	Since $1\in [0,\minmax(P^{(2)})]$, the poset $P^{(3)}=\myadd(P^{(2)},1)$ is constructed by applying {\addi}:
\end{minipage}
$\hspace*{1em}$
\begin{minipage}{11.5em}
	$$ 
	\mapsto\quad
	\begin{minipage}{8.5em}
  	\scalebox{0.70}{\includegraphics{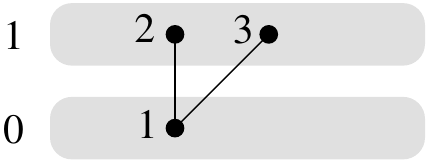}}
	\end{minipage}
	$$ 
\end{minipage}
\ \\[1.5em]
\begin{minipage}{23em}
	The poset $P^{(4)}=\myadd(P^{(3)},0)$ is constructed by applying {\addi}:
\end{minipage}
$\hspace*{1em}$
\begin{minipage}{11.5em}
	$$
	\mapsto \quad
	\begin{minipage}{8.5em}
  	\scalebox{0.70}{\includegraphics{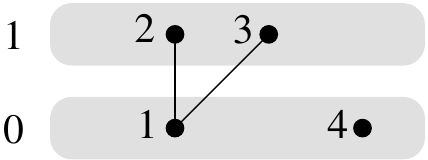}}
	\end{minipage}
	$$ 
\end{minipage}
\ \\[1.5em]
\begin{minipage}{23em}
	The poset $P^{(5)}=\myadd(P^{(4)},2)$ is constructed by applying {\addii}:
\end{minipage}
$\hspace*{1em}$
\begin{minipage}{11.5em}
	$$\mapsto\quad
	\begin{minipage}{8.5em}
  	\scalebox{0.70}{\includegraphics{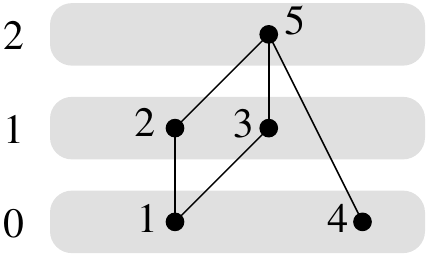}}
	\end{minipage}
	$$ 
\end{minipage}
\ \\[1.5em]
\begin{minipage}{23em}
	The poset $P^{(6)}=\myadd(P^{(5)},0)$ is constructed by applying {\addi}:
\end{minipage}
$\hspace*{1em}$
\begin{minipage}{11.5em}
	$$\mapsto\quad
	\begin{minipage}{8.5em}
  	\scalebox{0.70}{\includegraphics{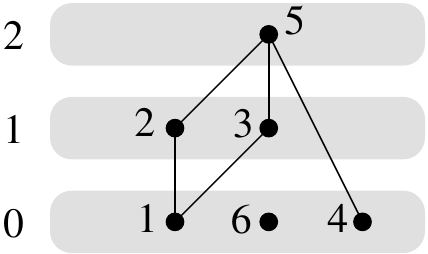}}
	\end{minipage}
	$$ 
\end{minipage}
\ \\[1.5em]
\begin{minipage}{12em}
	The poset $P^{(7)}=\myadd(P^{(6)},1)$ is constructed by applying {\addiii}.
	Note that we introduce a new empty level between levels $x_6-1$ and $x_6$
	and insert a new single element with the same downset as the elements
	that were on that level. Then all the elements above it have the set 
	$\MM=\{6\}$ included in their downsets.
\end{minipage}
$\hspace*{1em}$
\begin{minipage}{24.5em}
	$$\mapsto\;
	\begin{minipage}{10em}\scalebox{0.70}
  	{\includegraphics{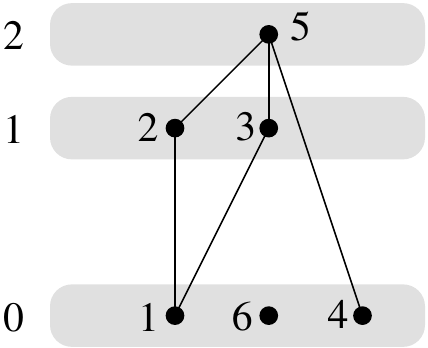}}
	\end{minipage} \mapsto\;
	\begin{minipage}{10em}
  	\scalebox{0.70}{\includegraphics{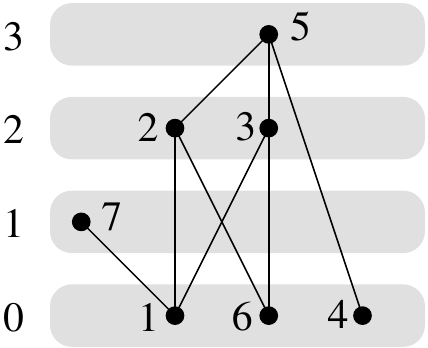}}
	\end{minipage}
	$$
\end{minipage}
\ \\[1.5em]
\begin{minipage}{21em}
\begin{minipage}{9em}
  \scalebox{0.70}{\includegraphics{secon2}}
\end{minipage}
=\quad
\begin{minipage}{8.6em}
  \includegraphics[scale=0.85]{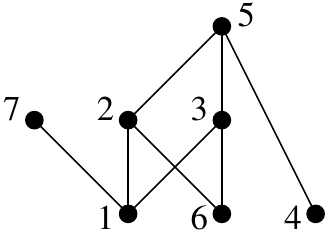}
\end{minipage} 
\end{minipage}
$\hspace*{0.5em}$
\begin{minipage}{14em}
Finally $P=P^{(7)}$ and we have the canonically labeled poset $\bcdkmap(x)\in \Poset_7$.
\end{minipage}
\ \\
\end{example}

\subsection{Bounded run lengths in ascent sequences}\label{enumeration}

In this subsection we prove Propositions~\ref{lemma} and~\ref{thm:mnk}
establishing the relation between runs in ascent sequences,
indistinguishable elements in \tpt-free posets, and entries of
restricted upper-triangular matrices.  To be more precise, we show
that $$|\PAseqs{n}{k}|=|\Posetnk{n}{k}|=|\MC{n}{k}|.$$

We use the following proposition to deal with ascent sequences in
order to obtain results for posets.

\begin{proposition}
\label{lemma} Let $x \in \Aseqs{n}$ and $P \in \Poset_n$ with $P=\bcdkmap(x)$.
Given $i<j$ we have that
$i \sim_P j$ if and only if $x_{i}=x_{i+1}=\cdots = x_{j}$.
\end{proposition}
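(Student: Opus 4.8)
The plan is to prove the statement by tracking how indistinguishability between two poset elements is created and preserved under the step-by-step construction $\bcdkmap$, exploiting the fact that $\bcdkmap$ inserts elements one at a time according to the addition rules. The key observation is that element $i$ and element $j$ (with $i<j$) are indistinguishable in $P=\bcdkmap(x)$ precisely when, at each step of the construction, they are placed on the same level and never separated by a subsequent insertion. Since an equal run $x_i = x_{i+1} = \cdots = x_j$ corresponds to a maximal block of consecutive insertions all made at the same value, the natural strategy is to show that such a run produces a block of mutually indistinguishable elements, and conversely that any two indistinguishable elements must have been inserted consecutively at the same value.

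First I would establish the forward direction: assuming $x_i = x_{i+1} = \cdots = x_j =: c$, I would argue that elements $i, i+1, \dots, j$ are all inserted by the same rule. The crucial point is that after inserting element $i$ at value $c$, we have $\srank(P^{(i)}) = c$ (by the stated property that $\srank(\myadd(P,i))=i$), so the next insertion at value $c$ falls in the range $[0,\srank]$ and is therefore governed by rule \addi, which places the new element into the existing level $L_c$ without altering any strict down-sets. Thus elements $i,\ldots,j$ accumulate in the same level $L_c$ and share identical strict down-sets at the moment of the construction. I would then need to verify that both $D(\cdot)$ and $U(\cdot)$ remain equal for all these elements throughout the remaining insertions $x_{j+1},\dots,x_n$: whatever later rule is applied, it acts on entire levels uniformly (in \addiii, the set $\MM$ and the down-set modifications depend only on level indices, not on individual elements), so any two elements sharing a level at one stage continue to share the same $D$ and $U$ forever after. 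This gives $i \sim_P j$.

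For the converse, I would argue the contrapositive: if the run containing $x_i$ does not extend to $x_j$, then there is some index $m$ with $i \le m < j$ and $x_m \ne x_{m+1}$. I would show that element $i$ and element $j$ then end up distinguishable. The cleanest route is to show that consecutive insertions with differing values force the two elements onto different levels, or create an asymmetry detectable in their up-sets or down-sets; in particular, since after inserting element $m$ we have $\srank = x_m$, an insertion at value $x_{m+1} \ne x_m$ either raises the number of levels (rules \addii, \addiii) or lands at a strictly different level via \addi, so elements inserted on opposite sides of step $m$ cannot occupy a common level and hence cannot be indistinguishable. A careful case analysis on whether $x_{m+1} < x_m$, $x_{m+1}$ equals $1+\levels$, or $x_m < x_{m+1} \le \levels$ will be required here.

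The main obstacle I anticipate is the converse direction, specifically verifying that differing consecutive values genuinely destroy indistinguishability rather than merely separating the elements temporarily. Because rule \addiii\ inserts a new \emph{empty} level and reshuffles down-sets of higher levels, I must confirm that two elements once separated are never re-merged into a single indistinguishability class by later insertions. I expect this to hinge on the invariant that the levels $L_0, L_1, \dots, L_{\levels}$ form a partition that is only ever refined or extended — never coarsened — as the construction proceeds, so that membership in distinct levels at any stage is permanent. Formalizing and proving this monotonicity invariant, and checking it against all three addition rules, will be the technical heart of the argument.
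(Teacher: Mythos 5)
Your forward direction is essentially sound, but the invariant you lean on is stated too strongly and is in fact false: two elements sharing a level at some stage do \emph{not} necessarily end up with the same up-set. Sharing a level forces equal strict down-sets forever (levels are never split or merged), but \addiii{} does not act uniformly on a level: the set $\MM$ consists of the \emph{maximal} elements below the insertion level, and a level can contain both maximal and non-maximal elements (in Example~\ref{ex2}, elements $1$, $4$ and $6$ all sit on level $0$ of $P^{(6)}$, yet only $6$ lies in $\MM$ at the next step, so $U(1)\neq U(6)$ in $P^{(7)}$). For the elements of a single run the argument survives, because you can strengthen the invariant to: the run's elements share both their level \emph{and} their maximality status at every stage, hence are always all in $\MM$ or all outside it, and are all absorbed into a new top down-set simultaneously by \addii. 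This is the invariant the paper actually uses (``$D(i)$ and $D(i+1)$ will either both stay unchanged or will be changing in the same way''), and your write-up needs it; level-membership alone is not enough.

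The converse is where the genuine gap lies. Your proposed mechanism --- that a value change at some step $m$ with $i\le m<j$ forces $i$ and $j$ onto different levels, and that level separation is permanent --- fails at the first claim: take $x=(0,1,0)$. Elements $1$ and $3$ both end up on level $0$ (identical strict down-sets), even though $x_1\neq x_2$; they are distinguishable only because $2\in U(1)\setminus U(3)$. So ``cannot occupy a common level'' is false, and the case analysis you defer cannot be completed along those lines. The paper's proof handles exactly this situation with a different, two-pronged argument: if some $x_s>x_i$ occurs with $i<s<j$, the minimal such $s$ gives an explicit witness $s\in U(i)$, $s\notin U(j)$; otherwise there is an ascent $x_sx_{s+1}$ with $x_{s+1}\le x_i$ strictly between $i$ and $j$, and one invokes the \emph{modified ascent sequence} $\hat{x}$ of \cite[Section 4]{BCDK}, whose entries record the final levels, to conclude $\hat{x}_i>\hat{x}_j$ and hence distinct down-sets. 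Your proposal contains neither the up-set witness nor any substitute for the modified-sequence machinery, so the ``if $i\sim_P j$ then the letters form a run'' direction is not established.
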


\begin{proof}
We first show that $i\sim_{P} (i+1)$ iff $x_{i}=x_{i+1}$. Let
$x_{i}=x_{i+1}$. Think of $P$ being created by adding elements one by
one and using the rules (Add1)--(Add3) and assume that $i+1$ has just
entered $P$ ($i$ has already been added to $P$ on the previous
step). Since $x_{i}=x_{i+1}$, at this point
$(D(i),U(i))=(D(i+1),U(i+1))$ where $U(i)=U(i+1)=\emptyset$.
Moreover, from the definitions of (Add1)--(Add3), $D(i)$ and $D(i+1)$
will either both stay unchanged or will be changing in the same way
while adding extra elements to $P$.  The same applies to $U(i)$ and
$U(i+1)$.  Thus, $i\sim_{P} (i+1)$. On the other hand, if $x_{i}\neq
x_{i+1}$, then $D(i)\neq D(i+1)$ after adding $i+1$ to $P$ ($i$ and
$(i+1)$ will be on different levels) and the definitions of
(Add1)--(Add3) guarantee that $i$ and $(i+1)$ will remain on different
levels while adding extra elements to $P$. That is, $i\not\sim_{P}
(i+1)$.

Next we show that if $x_i=x_j$ and there exists $x_k\neq x_i$ such
that $i<k<j$ then $i\not\sim_{P} j$. To prove this we need the notion
of the {\em modified ascent sequence} $\hat{x}$ and its properties
introduced in~\cite[Section 4]{BCDK}. If $x_ix_{i+1}\ldots x_j$
contains an element $x_s>x_i$ then we can take the minimum such $s$ to
see that $s\in U(i)$ but $s\not\in U(j)$ showing that $i\not\sim_{P}
j$. Otherwise, there must exist an ascent $x_sx_{s+1}$ with
$x_{s+1}\leq x_i$ and $i<s<j$. This would mean that in
$\hat{x}=\hat{x}_1\hat{x}_2\ldots\hat{x}_n$, we have
$\hat{x}_i>\hat{x}_j$, so $i$ will be on a higher level than $j$ in
$P$ and $i\not\sim_{P} j$.

To complete the proof we show that if $i\not\sim_{P} j$ then either
$x_i\neq x_j$ or $x_i=x_j$ but there exists $x_k\neq x_i$ such that
$i<k<j$. This, however, is a direct corollary to the definition and
properties of the modified ascent sequence $\hat{x}$ whose maximal
runs of equal elements correspond to the level distribution of
elements in $P$.  Namely, two different runs of the same element in
$\hat{x}$ correspond to elements in $P$ with the same down-sets but
with different up-sets --- this is a fact that is not explicitly
mentioned in ~\cite[Section 4]{BCDK} but it can be proved.
\end{proof}

We have the following immediate corollary to
Proposition~\ref{lemma}.

\begin{corollary} Primitive \tpt-free posets on $n$ elements are 
in one-to-one correspondence with primitive ascent sequences of 
length $n$.
\end{corollary}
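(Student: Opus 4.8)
The plan is to read this off directly from Proposition~\ref{lemma} together with the fact, recalled from~\cite{BCDK}, that $\bcdkmap\colon \Aseqs{n}\to\Poset_n$ is a bijection. The entire task reduces to showing that $\bcdkmap$ restricts to a bijection between the primitive ascent sequences $\PAseqs{n}{1}$ and the primitive posets $\Posetnk{n}{1}$, which will follow once I show that $P=\bcdkmap(x)$ is primitive if and only if $x$ is primitive.

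First I would unwind the two notions of primitivity. A poset $P$ is primitive precisely when it contains no pair of indistinguishable elements, i.e.\ when there are no indices $i<j$ with $i\sim_P j$. An ascent sequence $x$ is primitive precisely when it has no run of length greater than $1$, i.e.\ when there is no index $i$ with $x_i=x_{i+1}$.

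The key step is to apply Proposition~\ref{lemma} in both directions. Suppose $P=\bcdkmap(x)$ is not primitive, so there exist $i<j$ with $i\sim_P j$. By the proposition this forces $x_i=x_{i+1}=\cdots=x_j$, and since $j>i$ we obtain in particular $x_i=x_{i+1}$, so $x$ is not primitive. Conversely, if $x$ is not primitive then some index $i$ satisfies $x_i=x_{i+1}$; taking $j=i+1$ in Proposition~\ref{lemma} gives $i\sim_P(i+1)$, so $P$ is not primitive. Hence $P$ is primitive if and only if $x$ is primitive.

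Finally, since $\bcdkmap$ is a bijection from $\Aseqs{n}$ onto $\Poset_n$ and, by the equivalence just established, it carries primitive sequences exactly onto primitive posets, its restriction to $\PAseqs{n}{1}$ is a bijection onto $\Posetnk{n}{1}$. I do not expect a genuine obstacle here: the statement is an immediate corollary, and all the substance is already contained in Proposition~\ref{lemma}. The only point needing care is to confirm that the run condition of the proposition matches the presence or absence of an equal pair, and this is supplied precisely by the length-one specialization $j=i+1$ used above.
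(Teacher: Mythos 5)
Your proof is correct and follows exactly the route the paper intends: the paper states this as an ``immediate corollary'' of Proposition~\ref{lemma} without writing out the details, and your argument (specializing the proposition to $j=i+1$ for one direction and extracting $x_i=x_{i+1}$ from a nontrivial equivalence class for the other, then restricting the bijection $\bcdkmap$) is precisely the omitted verification.
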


One more corollary follows from the proof of Proposition~\ref{lemma}.

\begin{corollary}\label{pairs-indist} The statistic $\rep$ on
$\Posetnk{n}{k}$ corresponds to the statistic $\adjpairs$ on
$\PAseqs{n}{k}$ under $\bcdkmap$.
\end{corollary}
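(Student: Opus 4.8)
The plan is to reduce the claim to the single formula $\rep(P)=\sum_C(\size{C}-1)$, where the sum ranges over the equivalence classes $C$ of $\sim_P$, and then to match this quantity against $\adjpairs$ on the ascent-sequence side. By Proposition~\ref{lemma}, for $P=\bcdkmap(x)$ the equivalence classes of $\sim_P$ are precisely the maximal runs of $x$: a run $x_i=\cdots=x_j$ yields the class $\{i,i+1,\dots,j\}$. Hence, if the runs of $x$ have lengths $\ell_1,\dots,\ell_r$, then $\adjpairs(x)=\sum_{s}(\ell_s-1)=n-r$, since the equal pairs of $x$ are exactly the within-run adjacencies and there are none across run boundaries. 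It therefore suffices to prove $\rep(P)=\sum_s(\ell_s-1)=n-r$. (The restriction to $\Posetnk{n}{k}$ and $\PAseqs{n}{k}$ is automatic, since $\bcdkmap$ already restricts to a bijection between these families by the identification of runs with indistinguishability classes.)

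For the lower bound $\rep(P)\ge n-r$, I would first note that passing to an induced subposet preserves indistinguishability. Indeed, if $P'$ is the subposet of $P$ induced on a subset $S$ and $a,b\in S$ satisfy $a\sim_P b$, then $D_{P'}(a)=D(a)\cap S=D(b)\cap S=D_{P'}(b)$, and likewise $U_{P'}(a)=U_{P'}(b)$, so $a\sim_{P'}b$. Consequently, any \emph{primitive} induced subposet of $P$ contains at most one element from each class, hence at most $r$ elements, so at least $n-r$ elements must be removed to reach a primitive poset.

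For the upper bound $\rep(P)\le n-r$, I would exhibit a primitive subposet obtained by deleting exactly $n-r$ elements, namely one representative from each run. Let $y$ be the sequence obtained from $x$ by collapsing every maximal run to a single letter. Collapsing equal consecutive entries preserves the ascent count of every prefix, so $y$ is again an ascent sequence, now lying in $\PAseqs{r}{1}$. The key structural point is that the non-initial entries of a run are inserted by rule (\addi): after inserting an element of value $v$, the identity $\srank(\myadd(P,i))=i$ gives $\srank(P)=v$, so a following equal letter $v\in[0,\srank(P)]$ triggers (\addi), which leaves $D(P)$ unchanged and merely adjoins the new element to the existing level $v$. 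Arguing by induction on the length of $x$, one checks that deleting these $n-r$ copies alters neither the down-sets nor the up-sets of the surviving elements, so the induced subposet on the representatives is exactly $\bcdkmap(y)$. Since $y$ is primitive, the first corollary to Proposition~\ref{lemma} shows $\bcdkmap(y)$ is primitive, whence $\rep(P)\le n-r$. Combining the two bounds gives $\rep(P)=n-r=\adjpairs(x)$.

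I expect the main obstacle to be the upper bound, specifically the verification that deleting the within-run copies produces \emph{precisely} $\bcdkmap(y)$ as an induced subposet. This hinges on a careful reading of (\addi) together with the identity $\srank(\myadd(P,i))=i$: one must confirm that the $(\addi)$-insertions within a run leave $D(P)$, $\levels(P)$, and $\srank(P)$ unchanged and affect later insertions on the surviving elements only through the extra level members that are themselves deleted when restricting. Once that bookkeeping is in place, the identification with $\bcdkmap(y)$ and the appeal to primitivity are routine.
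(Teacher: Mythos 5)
Your argument is correct and takes essentially the same route as the paper, which presents this as an immediate consequence of (the proof of) Proposition~\ref{lemma}: the equivalence classes of $\sim_P$ are exactly the maximal runs of $x$, so both statistics equal $n$ minus the number of classes/runs. If you want to shorten your upper bound, note that keeping one representative per class already yields a primitive induced subposet without identifying it with $\bcdkmap(y)$: any witness $c$ distinguishing $a$ from $b$ (say $c\in D(a)\setminus D(b)$) has a surviving twin $c'$ with $D(c')=D(c)$ and $U(c')=U(c)$, and $c'$ is then also a witness in the induced subposet.
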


\subsection{Restricted matrices and ascent sequences}
In Dukes and Parviainen ~\cite{dp} a bijection $\Gamma: \M{n} \to
\Aseqs{n}$ was presented.  Here we find it convenient to describe the
inverse $\ourmap:\Aseqs{n} \to \M{n}$ of this map.  Given $A \in
\M{n}$, let $\ourindex(A)$ be the smallest value of $i$ such that
$A_{i,\dim(A)}$ is nonzero.  Given a value $m$ such that $0\leq m \leq
\dim (A)$, we define the matrix $\ouradd{A,m}$ according to the
following:
\begin{enumerate}
\item[(i)] If $0\leq m< \ourindex(A)$ then let $\ouradd{A,m}$ be the
  matrix $A$ with the entry at position $(m+1,\dim(A))$ increased by
  1.
\item[(ii)] If $\ourindex(A) \leq m \leq \dim(A)$ then we form
  $\ouradd{A,m}$ in the following way.  Let $A'$ be the matrix with
  $\dim(A')=\dim(A)+1$ formed by inserting a row of zeros immediately
  after row $m$ of $A$, and a column of zeros immediately after column
  $m$ of $A$.  Let $A'_{m+1,\dim(A)+1}=1$.  Swap the values $A'_{i,m+1}$
  and $A'_{i,\dim(A)+1}$ for all $1\leq i \leq m$.  Call the resulting
  matrix $\ouradd{A,m}$.\\
\end{enumerate}
As an example of the second construction, let
$$A=\threematrix 1 & 7 & 1 \\ 0 & 9 & 3 \\ 0 & 0 & 2 \ematrix .$$
Then $\ouradd{A,2}$ is given by
$$
\fourmatrix 1 & 7 &\phantom{0} & 1 \\ 0 & 9 & & 3 \\ \phantom{0} &&& \\ 0 & 0 & & 2 \ematrix
\to
\fourmatrix 1 & 7 &0 & 1 \\ 0 & 9 &0 & 3 \\ 0&0&0&1 \\ 0 & 0 & 0 & 2 \ematrix
\to
\fourmatrix 1 & 7 & 1&0 \\ 0 & 9 & 3&0 \\ 0&0&0&1 \\ 0 & 0 & 0 & 2 \ematrix = \ouradd{A,2}.
$$

Given $x=(x_1,\ldots , x_n) \in \Aseqs{n}$, let $\epsilon$ be the
empty matrix.  Define $\ouradd{\epsilon,0}:=(1)$ and
$$\ourmap (x) =
\ouradd{\cdots\ouradd{\ouradd{\epsilon,x_1},x_2}\cdots,x_n}.$$

\begin{proposition} \label{thm:mnk}
 For each $n\geq 0$ and $k\geq 1$, we have $\ourmap(x) \in \MC{n}{k}$
 if and only if $x \in \PAseqs{n}{k}$.
\end{proposition}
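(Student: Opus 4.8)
The plan is to track how a single application of the matrix-addition operator $\ouradd{\cdot,m}$ affects the run structure, so that building up $\ourmap(x)$ letter by letter mirrors reading $x$ letter by letter. The key observation is that the two cases of the definition of $\ouradd{A,m}$ correspond exactly to the two behaviors of a run: case (i), where $0\le m<\ourindex(A)$, increments an existing entry in the last column and thus \emph{extends a run}, whereas case (ii) inserts a fresh row and column whose only contribution to the last column is a new $1$, thus \emph{starting a new run}. I would make this precise by maintaining throughout the construction the invariant that the final column of the current matrix records the lengths of the letters inserted \emph{since the last time a new column was created}, and that the topmost nonzero entry of that column sits at row $\ourindex$. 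Under this reading, the entry that gets incremented in case (i) is precisely the entry corresponding to the value $x_t=m$, and the relevant notion of ``run of length at most $k$'' in $x$ translates directly into ``entry at most $k$'' in the matrix.

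First I would set up the induction on $n=|x|$, using the recursive definition $\ourmap(x)=\ouradd{\ourmap(x_1,\dots,x_{n-1}),x_n}$ and the base case $\ouradd{\epsilon,0}=(1)$, which is the unique element of $\MC{1}{k}$ for every $k\ge1$. Since $\ourmap$ is already known (from Dukes--Parviainen) to be a bijection $\Aseqs{n}\to\M{n}$, it suffices to prove that the \emph{largest entry} of $\ourmap(x)$ equals the \emph{longest run} of $x$; the proposition then follows by restricting the bijection to the subsets where this common statistic is at most $k$. So the real content is the statistic-matching statement $\run(x)=\max_{i,j}\ourmap(x)_{i,j}$, where I write $\run(x)$ for the maximal run length.

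Second I would carry out the inductive step by comparing $x_n$ with $x_{n-1}$. When $x_n=x_{n-1}$, the ascent-sequence rules force $x_n=m$ to fall in the range $[0,\ourindex(A))$ governed by case (i), so the relevant last-column entry increases by exactly $1$, extending the current run by one; when $x_n\ne x_{n-1}$, either $x_n$ lies in the case-(ii) range and a new entry $1$ is created, or $x_n$ lies in the case-(i) range but at a \emph{different} row than $x_{n-1}$, so again a fresh count of $1$ begins for that value. In every case, the entry tracking the current run has value equal to the length of the run of $x$ ending at position $n$, and all other entries are unchanged; hence the maximal entry of $\ourmap(x)$ equals $\run(x)$.

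The hard part will be justifying the bookkeeping in case (ii): the swap of $A'_{i,m+1}$ with $A'_{i,\dim(A)+1}$ for $1\le i\le m$ relocates the old last-column data into the newly inserted column, and I must verify that this relocation does not disturb the correspondence between entries and run-lengths of \emph{earlier} values, and in particular that it never creates an entry larger than one already present. I would handle this by showing that the swapped entries are exactly the counts of values strictly below $m$, which are ``closed off'' (no longer extendable, since $\ourindex$ of the new matrix becomes $m+1$) and therefore correspond to runs that have already ended in $x$; their values are thus run-lengths already accounted for and bounded by $\run(x_1,\dots,x_{n-1})\le\run(x)$. Establishing that $\ourindex$ updates correctly under both operations, so that the ``active'' range $[0,\ourindex)$ for case (i) stays synchronized with which value of $x$ can still be repeated, is the crux on which the whole argument rests.
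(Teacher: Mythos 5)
Your plan is sound and, if written out, proves something slightly stronger than the proposition: that the multiset of nonzero entries of $\ourmap(x)$ records the run lengths of $x$ (in particular, the largest entry equals the longest run), from which the statement follows by restricting the Dukes--Parviainen bijection. Your forward bookkeeping matches the paper's argument for one implication: the paper also observes that $\ourindex(A^{(i)})=x_i+1$, so that a run $x_i=\cdots=x_{i+k}=c$ repeatedly invokes rule (i) on the entry at position $(c+1,\dim)$ and drives it up to at least $k+1$, and that neither rule ever decreases an entry. Where you genuinely diverge is the converse. The paper proves ``$\ourmap(x)\notin\MC{n}{k}\Rightarrow x\notin\PAseqs{n}{k}$'' by running the \emph{inverse} map: it deconstructs the matrix with the removal rules $\subi$--$\subiii$ of \cite{dp}, waits until the offending entry $X_{ab}\ge k+1$ reaches the rightmost column, and argues that the next $k$ removals must all invoke $\subi$, producing $k+1$ consecutive equal letters. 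You instead stay entirely inside the forward construction and get both implications at once from the invariant that the ``active'' entry at row $\ourindex$ of the last column equals the length of the current run, while all other entries are frozen archived run lengths. This buys a cleaner, self-contained argument that does not need the removal rules, at the price of having to verify carefully the points you yourself flag as the crux: that $\ourindex$ stays equal to $x_t+1$ after each step; that within a maximal stretch of rule-(i) applications the inserted values are weakly decreasing (any increase forces rule (ii)), so a given row of the last column can only be incremented by consecutive equal letters; and that rule (ii) zeroes out rows $1,\dots,m$ of the new last column and pushes the remaining old entries below row $m+1=\ourindex$, where rule (i) can never reach them again. All of these check out, so your route is valid; it just remains a proof sketch until those invariants are established by the induction you describe.
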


\begin{proof}
Let $x=(x_1,\ldots , x_n) \in \Aseqs{n}$.  Define
$A^{(i)}=\ourmap(x_1,\ldots , x_i)$ for all $1\leq i \leq n$.  Let us
suppose that $x \not \in \PAseqs{n}{k}$ so that there exists $i$ such
that $x_i=x_{i+1}=\cdots = x_{i+k}=c$.  Since
$A^{(i)}=\ouradd{A^{(i-1)},x_i}$, we have
$\ourindex(A^{(i)})-1=x_i=c$.  Let $d=\dim(A^{(i)})$.  So the entry
$A^{(i)}_{c+1,d}\geq 1$.  Since $A^{(i+1)}=\ouradd{A^{(i)},c}$, and
$x_{i+1}=c < \ourindex(A^{(i)})=1+c$, the rule (i) is used and we have
$A^{(i+1)}$ as the matrix $A^{(i)}$ with the entry at position
$(c+1,d)$ increased by 1.  So $A^{(i+1)}_{c+1,d}\geq 2$.  Doing this
repeatedly, we find that $A^{(i+k)}_{c+1,d} \geq 1+k$, which means
that $A^{(i+k)} \not \in \PAseqs{i+k}{k}$, and so $A^{(n)} \not \in
\MC{n}{k}$ since neither of the construction rules (i) or (ii) can
decrease an entry of a matrix (although entries may be moved).

Next we prove that $\ourmap(x) \not \in \MC{n}{k}$ $\Rightarrow$ $x
\not\in \PAseqs{n}{k}$.
The inverse of $\ourmap$ was recursively described in \cite{dp}.  In
order to find the ascent sequence $(x_1,\ldots,x_n)$ corresponding
to $A \in \M{n}$, one finds that there is a unique $f(A) \in
\M{n-1}$ and value $x_n=\ourindex(A)-1$ such that
$A=\ouradd{f(A),x_n}$ and $f(A)=\ourmap(x_1,\ldots,x_{n-1})$.  To
determine the reduced matrix $f(A)$ one must invoke one of the three
removal rules, called $\mathsf{Rem1}-\mathsf{Rem3}$ in ~\cite{dp}.
We present the argument without describing these rules explicitly.

Let $X=\ourmap(x) \in \M{n}\backslash \MC{n}{k}$.  Then there is at
least one entry $X_{ab}$ in $X$ with $X_{ab} \geq k+1$.  At some stage
during the deconstruction process, the value $X_{ab}$ will be in the
rightmost column of $f(f(\cdots f(A)\cdots))$.  If there are non
negative values above it, they will be removed in due course of the
deconstruction.  One then has a matrix $B \in \M{m}$, where
$\ourvalue(B)=X_{ab}$ and
$$A= \ouradd{ \cdots \ouradd{\ouradd{B,x_{m+1}},x_{m+2}} \cdots ,
  x_n}.$$ Since $X_{ab}\geq k+1$, the next $k$ removals will invoke
${\mathsf{Rem1}}$, thereby giving $x_{m-1}=x_{m-2}=\ldots =
x_{m-k}$.  Since $\ourvalue(B) \geq 1$, regardless of which removal
rule is used next, one finds that $x_{m-k-1}=x_{m-k}$.  This implies
there are at least $k+1$ consecutive entries in the ascent sequence
which take the same value.  Hence $x \not \in \PAseqs{n}{k}$.
\end{proof}

\section{Enumerating ascent sequences with restricted runs}\label{conj-solved}
The primitive ascent sequences of length $n$ are in one-to-one
correspondence with matrices in $\MC{n}{1}$, see ~\cite[Thm.  13]{dp}.
Jovovic~\cite{J} conjectured the generating function (\ref{con}) for
the number of matrices in $\MC{n}{1}$ (see ~\cite[A138265]{oeis}).
Here we prove this conjecture (Theorem~\ref{count}) by using the
bijective correspondence with ascent sequences, and we also generalize
the generating function (\ref{con}) to count more complicated objects
(Theorem~\ref{bcor}).

In Bousquet-M\'elou et al.~\cite{BCDK} it was shown that
\begin{equation} \label{markone}
P(x)= \sum_{a \in \Aseqs{}} x^{|a|} = \sum_{n\geq 0} \prod_{i=1}^n
\left(1 - \left(1-x\right)^i\right).
\end{equation}
Let $K(x) = \displaystyle\sum_{n\geq 0} k_n x^n$ where
$k_n=|\PAseqs{n}{1}|$ is the number of primitive ascent sequences of
length $n$.  Due to Propositions~\ref{lemma} and~\ref{thm:mnk}
we have
$$K(x) =\sum_{n\geq 0} |\MC{n}{1}|x^n=\sum_{n\geq 0}
|\Posetnk{n}{1}|x^n.$$

We now give an explicit formula for $K(x)$, proving a conjecture of
Jovovic~\cite{J}.

\begin{theorem}\label{count}  We have
$$K(x) =\ds \sum_{n\geq 0} \prod_{i=0}^n \left( 1 - \frac{1}{(1+x)^i}
\right).$$ \end{theorem}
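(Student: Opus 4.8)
The plan is to reduce Theorem~\ref{count} to the Bousquet-M\'elou et al.\ formula (\ref{markone}) for all ascent sequences by decomposing an arbitrary ascent sequence into its run structure, and then to finish with a single substitution. The identity I aim to prove is
\[
P(x)=\sum_{m\ge 0}k_m\left(\frac{x}{1-x}\right)^m=K\!\left(\frac{x}{1-x}\right),
\]
where $k_m=\size{\PAseqs{m}{1}}$; inverting the substitution then yields the claimed closed form for $K$.

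The combinatorial heart is a run-inflation bijection. Given $a=(a_1,\dots,a_n)\in\Aseqs{n}$, collapse each maximal run to a single letter to obtain a word $\bar a=(b_1,\dots,b_m)$, and record the run lengths as a composition $(c_1,\dots,c_m)$ of $n$ into $m$ positive parts. I would first verify that $\bar a\in\PAseqs{m}{1}$: equal consecutive letters are never ascents, so the prefix ascent count $\asc(a_1,\dots,a_{i-1})$ is constant along each run of $a$ and, at the first letter of a run, equals the corresponding prefix ascent count of $\bar a$; hence $\bar a$ starts with $0$, obeys $b_j\le 1+\asc(b_1,\dots,b_{j-1})$, and by construction has no two equal consecutive letters. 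Conversely, given any primitive ascent sequence $(b_1,\dots,b_m)$ and any composition $(c_1,\dots,c_m)$ of $n$ into positive parts, replacing each $b_j$ by $c_j$ copies of itself returns an ascent sequence of length $n$: at the first letter of the $j$-th run the defining inequality is exactly the one that $b_j$ satisfies in $\bar a$, while at every later letter of that run the relevant bound $1+\asc(\text{prefix})$ has only increased, so the inequality persists. These maps are mutually inverse, giving a bijection between $\Aseqs{n}$ and the set of pairs (primitive ascent sequence of length $m$, composition of $n$ into $m$ positive parts), summed over $m$.

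Translating to generating functions is then routine: the compositions of $n$ into $m$ positive parts are counted by $\binom{n-1}{m-1}$, and $\sum_{n\ge m}\binom{n-1}{m-1}x^n=(x/(1-x))^m$, so summing the bijection over $n$ and grouping by $m$ gives the displayed identity $P(x)=K(x/(1-x))$. Finally I would invert by putting $x=u/(1+u)$, so that $x/(1-x)=u$ and $1-x=1/(1+u)$; substituting into (\ref{markone}) yields
\[
K(u)=P\!\left(\frac{u}{1+u}\right)=\sum_{n\ge 0}\prod_{i=1}^n\left(1-\frac{1}{(1+u)^i}\right),
\]
which is the asserted formula after renaming $u$ as $x$. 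The only step demanding real care is the verification in the previous paragraph that inflation never violates the ascent-sequence inequalities; the composition count and the substitution $x\mapsto x/(1+x)$ are mechanical once that bijection is secured.
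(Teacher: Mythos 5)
Your proposal is correct and follows essentially the same route as the paper: decompose each ascent sequence into its underlying primitive sequence plus run multiplicities, deduce $P(x)=K(x/(1-x))$, and invert the substitution into \eqref{markone}. The only difference is that you explicitly verify that collapsing and inflating runs preserves the ascent-sequence inequalities, a point the paper takes for granted; note also that your derivation gives the product starting at $i=1$, consistent with \eqref{con} and \eqref{introcon}, so the lower limit $i=0$ in the theorem's display is a typo.
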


\begin{proof} Every ascent sequence $a=(a_1,\ldots , a_n)$ may be written
uniquely in the form
$$(b_1^{m_1},\ldots , b_k^{m_k})$$ where $(b_1,\ldots , b_k)$ is a
primitive ascent sequence, and $m_i$ is the number of consecutive
entries of $b_i$ in $a$.  For example, if
$a=(0,0,1,1,1,0,2,2,3,1,1,0,4)$ then
$a=(0^2,1^3,0^1,2^2,3^1,1^2,0^1,4^1)$ and $b=(0,1,0,2,3,1,0,4)$ is the
underlying primitive ascent sequence with multiplicities
$(2,3,1,2,1,2,1,1)$.  A primitive ascent sequence of length $n\geq 1$
gives rise to an infinite number of ascent sequences by choosing
multiplicities $(m_1,\ldots , m_n) \in \mathbb{N}^n$.  Therefore,
\begin{equation} \label{marktwo}
P(t) \;=\; \sum_{n\geq 0} k_n (t+t^2+\cdots )^n \;=\; \sum_{n\geq 0}
k_n \left(\dfrac{t}{1-t}\right)^n =K\left( \frac{t}{1-t}\right).
\end{equation}
Setting $x = {t}/({1-t})$, we see that $t = {x}/({1+x})$ so that

\begin{equation}\label{mark3}
K(x) = P\left(\frac{x}{1+x}\right) = \sum_{n\geq 0} \prod_{i=1}^n
\left( 1 - \frac{1}{(1+x)^i} \right).
\end{equation}
\end{proof}

Let 
\[ 
\ds B_k(x)= \sum_{n\geq 0}|\PAseqs{n}{k}|x^n=\sum_{n\geq
  0}|\MC{n}{k}|x^n=\sum_{n\geq 0} |\Posetnk{n}{k}|x^n,
\] 
where the latter two identities were established in
Theorem~\ref{thm:mnk} and Proposition~\ref{lemma}.  Then we have
the following theorem which generalizes Theorem~\ref{count} (the case
$k=1$) and gives the generating function for the number of ascent
sequences that have a run of length at most $k$.

\begin{theorem}\label{bcor} We have
$$\ds \sum_{n\geq 0}|\PAseqs{n}{k}|x^n=\sum_{n\geq 0} 
|\MC{n}{k}|x^n=\sum_{n\geq 0} |\Posetnk{n}{k}|x^n=\sum_{n\geq 0}
\prod_{i=1}^n \left(1-\left( \dfrac{1-x}{1-x^{k+1}}
  \right)^i\right).$$
\end{theorem}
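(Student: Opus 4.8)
The plan is to reduce the statement to its last equality and then establish that by generalizing the argument of Theorem~\ref{count}. The first two equalities, $\sum_{n\geq 0}|\PAseqs{n}{k}|x^n=\sum_{n\geq 0}|\MC{n}{k}|x^n=\sum_{n\geq 0}|\Posetnk{n}{k}|x^n$, are already supplied by Proposition~\ref{thm:mnk} and Proposition~\ref{lemma}, so nothing new is required there. It therefore suffices to find a closed form for $B_k(x)=\sum_{n\geq 0}|\PAseqs{n}{k}|x^n$, the generating function counting ascent sequences all of whose runs have length at most $k$.

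First I would invoke the ``run decomposition'' used in the proof of Theorem~\ref{count}: every ascent sequence is written uniquely as $(b_1^{m_1},\ldots,b_j^{m_j})$ with $(b_1,\ldots,b_j)$ a primitive ascent sequence and each $m_i\geq 1$. Since consecutive entries of a primitive sequence differ, the maximal runs of the full sequence are exactly the blocks $b_i^{m_i}$, so the sequence lies in $\PAseqs{}{k}$ precisely when every $m_i\leq k$. Each of the $j$ positions then contributes, independently, a factor $x+x^2+\cdots+x^k$ to the weight, whence
\[
B_k(x)=\sum_{j\geq 0}k_j\,(x+x^2+\cdots+x^k)^j=K\!\left(x+x^2+\cdots+x^k\right),
\]
where $K$ is the primitive-ascent-sequence generating function of Theorem~\ref{count} and $k_j=|\PAseqs{j}{1}|$. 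The substitution is a legitimate composition of formal power series because $x+\cdots+x^k$ has zero constant term.

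Next I would carry out the substitution. Writing $y=x+x^2+\cdots+x^k=x(1-x^k)/(1-x)$, the relation $K(x)=P(x/(1+x))$ established in the proof of Theorem~\ref{count} gives $K(y)=P(y/(1+y))$, where $P(t)=\sum_{n\geq 0}\prod_{i=1}^n(1-(1-t)^i)$ by \eqref{markone}. A short computation yields $1+y=(1-x^{k+1})/(1-x)$ and hence
\[
1-\frac{y}{1+y}=\frac{1}{1+y}=\frac{1-x}{1-x^{k+1}}.
\]
Substituting this into the product formula for $P$ gives
\[
B_k(x)=\sum_{n\geq 0}\prod_{i=1}^n\left(1-\left(\frac{1-x}{1-x^{k+1}}\right)^i\right),
\]
which is the claimed expression; setting $k=1$ recovers Theorem~\ref{count}.

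The manipulations are routine formal-power-series algebra, so there is no serious obstacle. The only points needing care are verifying that the run decomposition respects the run-length bound \emph{exactly}---this is where the primitivity of $(b_1,\ldots,b_j)$ is essential, as it guarantees that the blocks $b_i^{m_i}$ really are the maximal runs, so that ``all runs at most $k$'' translates cleanly into ``all multiplicities at most $k$''---and checking the arithmetic of the substitution $t\mapsto y/(1+y)$, which collapses neatly to $(1-x)/(1-x^{k+1})$.
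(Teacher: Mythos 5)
Your proof is correct and follows essentially the same route as the paper: the paper likewise reduces to the last equality via Propositions~\ref{thm:mnk} and~\ref{lemma}, then writes $B_k(x)=\sum_{n}k_n(x+x^2+\cdots+x^k)^n=K\bigl(x(1-x^k)/(1-x)\bigr)$ and simplifies to the stated product. Your write-up merely spells out the substitution arithmetic and the justification that the run decomposition translates the run-length bound into the multiplicity bound, both of which the paper leaves implicit.
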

\begin{proof}
It is easy to see that
\begin{equation*}
B_k(x)=\sum_{n\geq 0} k_n (x+x^2+\cdots + x^k)^n =
K\left(\frac{x(x^k-1)}{(x-1)}\right) = \sum_{n\geq 0} \prod_{i=1}^n
\left(1-\left( \dfrac{1-x}{1-x^{k+1}}
  \right)^i\right).
\end{equation*}
\end{proof}

\section{Enumeration of ascent sequences by ascents, equal pairs,
and last letter}\label{enum}

The theorems in this section concern the enumeration of ascent
sequences.
Let 
\begin{eqnarray}
G(u,v,y,t) &=& \sum_{s \in \Aseqs{}} u^{\asc(s)} v^{\last(s)}
y^{\adjpairs(s)} t^{|s|} 
 = \sum_{a,\yell,b,n\geq 0} G_{a,\yell,b,n} u^a v^{\yell} y^b t^n
\end{eqnarray}
be the generating function for ascent sequences according to 
the statistics introduced in Section 1.
The value $G_{a,b,\yell,n}$ is the number of ascent sequences of
length $n$ with $a$ ascents, $b$ equal pairs, and last letter $\yell$.

From the correspondences in ~\cite{BCDK,dp}  and Corollary~\ref{pairs-indist}, we see
that this generating function is also the generating function of {\tpt}-free posets
and our upper-triangular matrices:
\begin{eqnarray}
G(u,v,y,t)
&=& \sum_{P \in \Poset} u^{\levels(P)} v^{\minmax(P)} y^{\rep(P)}
t^{\size{P}}\label{eqa}\\
&=& \sum_{A \in \M{}} u^{\dim(A)-1} v^{\ourindex(A)-1} y^{\ourextra(A)} t^{|A|}. \label{eqb}
\end{eqnarray}

Let $H(u,v,y,t)=G(u,v,y,t)-1$ be the generating function for these
statistics over all nonempty ascent sequences.

\begin{lemma}\label{lem:Grec}
The formal power series $H(u,v,y,t)$ satisfies
\begin{eqnarray}\label{eq:G1}
&&H(u,v,y,t)(v-1 -t-tyv +ty+tuv) \nonumber \\
&&= t(v-1)- tH(u,1,y,t) + tuv^2 H(uv,1,y,t).
\end{eqnarray}
\end{lemma}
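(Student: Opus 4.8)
The plan is to build every nonempty ascent sequence by recording its last letter and appending one new letter at a time, and to turn this recursion directly into the functional equation for $H$.

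First I would fix the recursive structure. A nonempty ascent sequence is either the length-one sequence $(0)$, which contributes the monomial $t$ to $H$, or it arises from a unique shorter nonempty ascent sequence $s'$ by appending a new last letter $j$ with $0\le j\le\asc(s')+1$. Writing $a=\asc(s')$, $\ell=\last(s')$, and $b=\adjpairs(s')$, appending $j$ produces a sequence of length $|s'|+1$ whose last letter is $j$, whose number of ascents is $a+[j>\ell]$ (a new ascent appears exactly when $j>\ell$), and whose number of equal pairs is $b+[j=\ell]$ (a new equal pair appears exactly when $j=\ell$). Hence its weight is $u^{a+[j>\ell]}v^{j}y^{b+[j=\ell]}t^{|s'|+1}$.

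Next I would sum this weight over $j$, factoring out $u^{a}y^{b}t^{|s'|+1}$ to leave
\[ \sum_{j=0}^{a+1} u^{[j>\ell]}v^{j}y^{[j=\ell]} = \frac{v^{\ell}-1}{v-1} + v^{\ell}y + uv\,\frac{v^{a+1}-v^{\ell}}{v-1}, \]
obtained by splitting the range into $j<\ell$, $j=\ell$, and $j>\ell$ and evaluating the two geometric pieces. Here I would record the elementary fact that $\last(s')\le\asc(s')$ for every ascent sequence (an easy induction on length), so that all three ranges are consistent and the closed form above is valid.

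Then I would multiply by $u^{a}y^{b}t^{|s'|+1}$ and sum over all nonempty $s'$, identifying each resulting series. The two terms of $\tfrac{v^{\ell}-1}{v-1}$ contribute $\tfrac{t}{v-1}\bigl(H-H(u,1,y,t)\bigr)$, since dropping the last-letter weight (replacing $v^{\ell}$ by $1$) turns $H$ into $H(u,1,y,t)$; the $v^{\ell}y$ term contributes $tyH$; and the last piece contributes $\tfrac{tuv}{v-1}\bigl(vH(uv,1,y,t)-H\bigr)$, using that $\sum_{s'}(uv)^{\asc(s')}y^{\adjpairs(s')}t^{|s'|}=H(uv,1,y,t)$. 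Together with the base term $t$ this yields
\[ H = t + \frac{t}{v-1}\bigl(H-H(u,1,y,t)\bigr) + tyH + \frac{tuv}{v-1}\bigl(vH(uv,1,y,t)-H\bigr). \]
Multiplying through by $v-1$ and collecting every occurrence of $H=H(u,v,y,t)$ on the left produces the coefficient $v-1-t-tyv+ty+tuv$, with right-hand side $t(v-1)-tH(u,1,y,t)+tuv^2H(uv,1,y,t)$, which is the claimed identity. The main obstacle is the correct bookkeeping of the substitution $u\mapsto uv$: one must recognize that weighting each ascent sequence by $v^{\asc}$ (the exponent $a$ entering through the summation bound $a+1$, not through the last letter) is exactly the effect of replacing $u$ by $uv$ while setting the last-letter variable to $1$. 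Everything else is the three-way case split on $j$ and routine simplification after clearing the denominator $v-1$.
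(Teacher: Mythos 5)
Your proposal is correct and follows essentially the same route as the paper: both build each nonempty ascent sequence by appending a last letter $j\in[0,\asc(s')+1]$ to a shorter sequence, split into the cases $j<\ell$, $j=\ell$, $j>\ell$, sum the geometric pieces, and identify the resulting series as $H(u,v,y,t)$, $H(u,1,y,t)$ and $H(uv,1,y,t)$ before clearing the denominator $v-1$. Your explicit remark that $\last(s')\le\asc(s')$ (needed for the three ranges to be consistent) is a small point the paper leaves implicit, but otherwise the arguments coincide.
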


\begin{proof}
It is easy to see that 
\begin{eqnarray*}
\lefteqn{G(u,v,y,t)}\\ &=& 
1+t+ t\sum_{n \geq 1,\atop a,b,\yell \geq 0}
	G_{a,b,\yell,n} t^n\left(\left(\sum_{i=0}^{\yell - 1} u^a v^i y^b\right) + u^a
	v^{\yell} y^{b+1} + \sum_{i=\yell+1}^{a+1} u^{a+1} v^i y^b\right) \\ 
&=& 1+t+
	t\sum_{n \geq 1,\atop a,b,\yell \geq 0} G_{a,b,\yell,n} t^n u^a y^b
	\left(\frac{v^{\yell} -1}{v-1} +yv^{\yell} +u \frac{v^{a+2}
  -v^{\yell+1}}{v-1}\right) \\ 
&=& 1+t + t(G(u,v,y,t)-1) \left( \frac{1+y(v-1) -uv}{v-1} \right) 
	- \frac{t}{v-1}(G(u,1,y,t)-1)\\ && + \frac{tuv^2}{v-1}
(G(uv,1,y,t)-1).
\end{eqnarray*}
Since $G(u,v,y,t)=1+H(u,v,y,t)$ we find that 
\begin{eqnarray*}
{(v-1) H(u,v,y,t)} &=& t(v-1) + H(u,v,y,t) (t+tyv -ty-tuv)\\ && -
tH(u,1,y,t) + tuv^2 H(uv,1,y,t).
\end{eqnarray*}
\end{proof}

We use the above lemma to give an expression for the power series $G(u,1,y,t)$.

\begin{theorem}\label{thm:G1}
We have
$$
G(u,1,y,t) = 1+\frac{t(1-u)}{\Delta_1} + \sum_{n=1}^{\infty}
  \frac{t(1-u)(1-ty)^n(1+t-ty)^n \prod_{i=1}^n
    \Gamma_i}{\Delta_n\Delta_{n+1}},
$$
where $\Delta_k =(1-ty)^k(1-u) + u(1+t-ty)^k$ and $ \Gamma_k =
  ({u(1+t-ty)^k})/{\Delta_k}.$
\end{theorem}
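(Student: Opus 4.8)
The plan is to solve the functional equation \eqref{eq:G1} for $H$ using the kernel method, so I begin by isolating the coefficient of $H(u,v,y,t)$. Writing the kernel as
$$
\mathcal{K}(u,v) = v-1-t-tyv+ty+tuv,
$$
equation \eqref{eq:G1} reads $H(u,v,y,t)\,\mathcal{K}(u,v) = t(v-1) - tH(u,1,y,t) + tuv^2 H(uv,1,y,t)$. The crucial observation is that the right-hand side involves $H$ only at $v=1$, but evaluated at the shifted first argument $uv$ in the last term. This is the signature of a kernel-method problem where cancelling the kernel produces a recursion that telescopes in the pair $(u, uv)$. First I would regard $\mathcal{K}(u,v)=0$ as an equation determining a distinguished value $v=V(u)$; solving the linear-in-$v$ relation $v(1-ty+tu) = 1+t-ty$ gives $V(u) = (1+t-ty)/(1-ty+tu)$, which is a formal power series in $t$ with constant term $1$, hence a legitimate substitution into the power series $H$.

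Substituting $v=V(u)$ annihilates the left-hand side, yielding the recursion
$$
tH(u,1,y,t) = t(V(u)-1) + tu\,V(u)^2\, H(uV(u),1,y,t),
$$
which I would rewrite, after dividing by $t$, as an expression for $H(u,1,y,t)$ in terms of $H$ evaluated at the new first argument $u' = uV(u)$. The key step is then to iterate this relation. Setting $u_0=u$ and $u_{n+1}=u_n V(u_n)$, each application peels off a term $(V(u_n)-1)$ multiplied by an accumulating product of factors $u_j V(u_j)^2$. I expect the product $\prod_{j=0}^{n-1} u_j V(u_j)^2$ to simplify dramatically: writing each $V(u_j)$ as a ratio of the shifted quantities, most factors should telescope so that the partial products collapse into the denominators $\Delta_n\Delta_{n+1}$ and the numerators $(1-ty)^n(1+t-ty)^n\prod_i\Gamma_i$ appearing in the claimed formula. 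Concretely, I would verify by induction that the iterate $u_n$ has a closed form of the type $u(1+t-ty)^n/\Delta_n$ (up to the normalising $\Delta$), since $\Delta_k=(1-ty)^k(1-u)+u(1+t-ty)^k$ is precisely the denominator that arises when one clears the recursively nested fractions $uV(u)$, $(uV(u))V(uV(u))$, and so on.

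The main obstacle will be establishing the exact closed form for the iterate $u_n$ and the corresponding telescoped product, i.e.\ proving by induction that
$$
u_n = \frac{u(1+t-ty)^n}{\Delta_n}, \qquad \prod_{j=0}^{n-1} u_j V(u_j)^2 = \frac{(1-ty)^n(1+t-ty)^n\prod_{i=1}^n\Gamma_i}{\Delta_n}\cdot(\text{normalisation}),
$$
matching the shape of the summand in the theorem. Once this algebraic identification is in hand, convergence in the formal-power-series sense follows because $u_n\to 0$ as $n\to\infty$ (each step multiplies by a factor with positive $t$-valuation), so the tail $u_n V(u_n)^2 H(u_n,1,y,t)$ contributes nothing in the limit and the infinite sum $\sum_{n\ge 0}$ converges $t$-adically. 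Finally I would reconcile indices: the theorem states $G(u,1,y,t)=1+H(u,1,y,t)$, so the leading $1$ comes from the empty sequence, the $n=0$ term of the telescoping sum gives $t(1-u)/\Delta_1$, and the general term produces the displayed summand, completing the proof modulo the bookkeeping of the telescoping product.
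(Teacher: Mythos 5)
Your plan is essentially identical to the paper's proof: the paper cancels the kernel at the same root $W(u,y,t)=(1+t-ty)/(1+tu-ty)$, iterates the resulting recursion for $H(u,1,y,t)$, and shows the iterated argument is exactly $\Gamma_n=u(1+t-ty)^n/\Delta_n$ via the substitution identities $\Gamma_k|_{u=\Gamma_s}=\Gamma_{k+s}$ and $\Delta_k|_{u=\Gamma_s}=\Delta_{k+s}/\Delta_s$, which is precisely your conjectured closed form for $u_n$ and the telescoping product. One small correction to your convergence remark: the tail does not die because $u_n$ acquires positive $t$-valuation (it does not --- $V$ has constant term $1$ in $t$, so $u_n\equiv u \pmod{t}$); it dies because the accumulated prefactor contains $\prod_{i=1}^{n}\Gamma_i$, which is divisible by $u^{n}$, and this $u$-adic argument is exactly how the paper justifies passing to the infinite sum.
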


\begin{proof}
The left hand side of the functional equation (\ref{eq:G1}) vanishes
when the coefficient to $H(u,v,y,t)$ is zero.  This happens precisely
when $v$ is
\begin{equation}\label{eq:v}
W(u,y,t)= \frac{1+t-ty}{1+tu-ty}.
\end{equation}
Replacing $v$ by $W(u,y,t)$ in (\ref{eq:G1}) gives
$$0 = \frac{t^2(1-u)}{1+tu-ty} -tH(u,1,y,t) +
tu\left(\frac{1+t-ty}{1+tu-ty}\right)^2
H\left(u\frac{1+t-ty}{1+tu-ty},1,y,t\right)$$ and hence
\begin{equation}\label{eq:G2}
H(u,1,y,t) = \frac{t(1-u)}{1+tu-ty} +
u\left(\frac{1+t-ty}{1+tu-ty}\right)^2
H\left(u\frac{1+t-ty}{1+tu-ty},1,y,t\right).
\end{equation}

Next let
\begin{equation*}
\Delta_k =(1-ty)^k(1-u) + u(1+t-ty)^k.
\end{equation*}
 It is easy to check that $\Delta_1 = 1+tu-ty$.  Also let
\begin{equation*}
\Gamma_k = \frac{u(1+t-ty)^k}{\Delta_k}.
\end{equation*}
The following identities are immediate:
\begin{equation*}
(1-u)|_{u=\Gamma_s} ~=~ \frac{\Delta_s}{\Delta_s} -
  \frac{u(1+t-ty)^s}{\Delta_s} ~=~ \frac{(1-ty)^s(1-u)}{\Delta_s},
\end{equation*}
\begin{equation*}
\Delta_k|_{u=\Gamma_s} ~=~ \frac{(1-ty)^k(1-ty)^s(1-u)}{\Delta_s}+
\frac{u(1+t-ty)^s(1+t-ty)^k}{\Delta_s} ~=~
\frac{\Delta_{k+s}}{\Delta_s},
\end{equation*}
\begin{equation*}
\frac{(1-u)}{\Delta_k}|_{u=\Gamma_s} ~=~ \frac{\Delta_{s}}{\Delta_{k+s}}
\frac{(1-ty)^s(1-u)}{\Delta_s} ~=~ \frac{(1-ty)^s(1-u)}{\Delta_{k+s}},
\end{equation*}
\begin{equation*}
\Gamma_k|_{u=\Gamma_s} ~=~ (1+t-ty)^k\frac{u(1+t-ty)^s}{\Delta_{s}}
\frac{\Delta_{s}}{\Delta_{s+k}} ~=~ \Gamma_{s+k}.
\end{equation*}
We can then rewrite (\ref{eq:G2}) as
\begin{equation}\label{eq:G3}
H(u,1,y,t) =
\frac{t(1-u)}{\Delta_1}+\frac{(1+t-ty)}{\Delta_1}\Gamma_1 H(\Gamma_1,1,y,t).
\end{equation}
Iterating (\ref{eq:G3}) gives 
\begin{eqnarray}\label{eq:G4}
H(u,1,y,t)
 &=& \frac{t(1-u)}{\Delta_1} + 
\frac{(1+t-ty)}{\Delta_1}\Gamma_1\left\{
t\frac{(1-ty)(1-u)}{\Delta_1} \frac{\Delta_1}{\Delta_2}  \right.  \nonumber \\ && \left.  
+ (1+t-ty)
\frac{\Delta_1}{\Delta_2} \Gamma_2
G(\Gamma_2,1,y,t)\right\} \nonumber \\
&=& \frac{t(1-u)}{\Delta_1} + \frac{t(1-u)(1-ty)(1+t-ty) \Gamma_1}{\Delta_1 \Delta_2} + \nonumber \\
&& \frac{(1+t-ty)^2 \Gamma_1 \Gamma_2}{\Delta_2} H(\Gamma_2,1,y,t).
\end{eqnarray}
If we iterate (\ref{eq:G4}), then we find that
\begin{eqnarray}\label{eq:G5}
H(u,1,y,t) &=& \frac{t(1-u)}{\Delta_1} + \frac{t(1-u)(1-ty)(1+t-ty)
  \Gamma_1}{\Delta_1 \Delta_2} \nonumber \\ &&
+\frac{t(1-u)(1-ty)^2(1+t-ty)^2 \Gamma_1\Gamma_2}{\Delta_2 \Delta_3}
\nonumber \\ &&+ \frac{t(1-u)(1-ty)^3(1+t-ty)^3
  \Gamma_1\Gamma_2\Gamma_3}{\Delta_3 \Delta_4} \nonumber
\\ &&+\frac{(1+t-ty)^4 \Gamma_1\Gamma_2\Gamma_3
  \Gamma_4}{\Delta_4}H(\Gamma_4,1,y,t).
\end{eqnarray}
One can then easily prove by induction that
\begin{eqnarray}
H(u,1,y,t) &=& \frac{t(1-u)}{\Delta_1} + \sum_{n=1}^{2^n-1}
\frac{t(1-u)(1-ty)^n(1+t-ty)^n \prod_{i=1}^n
  \Gamma_i}{\Delta_n\Delta_{n+1}} + \nonumber \\ &&
\frac{(1+t-ty)^{2^n}\prod_{i=1}^{2^n} \Gamma_i}{\Delta_{2^n}}
H(\Gamma_{2^n},1,y,t).
\end{eqnarray}
Since each $\Gamma_i$ has a factor of $u$, it is easy to see that, as
a formal power series in $u$,
\begin{equation}\label{finalG(u,1,y,t)}
H(u,1,y,t) = \frac{t(1-u)}{\Delta_1} + \sum_{n=1}^{\infty}
\frac{t(1-u)(1-ty)^n(1+t-ty)^n \prod_{i=1}^n
  \Gamma_i}{\Delta_n\Delta_{n+1}}.
\end{equation}
\end{proof}
The first few terms of $G(u,1,y,t)$ are
\begin{equation}
G(u,1,y,t) = 1+\frac{P_0(t,y)}{(1-ty)} + \frac{P_1(t,y)}{(1-ty)^3}u +
\frac{P_2(t,y)}{(1-ty)^6}u^2 + \frac{P_3(t,y)}{(1-ty)^{10}}u^3 +
O(u^4)
\end{equation}
where the power series $P_i(t,y)$ are given in Figure~\ref{ppolys}.
\begin{figure}
$$
\begin{array}{|l|c|} \hline
i & P_i(t,y) \\ \hline \hline 0 & t\\ \hline 1 &
t^2(1-ty)+t^3\\ \hline 2 & t^3+4 t^4+4 t^5+t^6-3 t^4 y-8 t^5 y-4 t^6
y+3 t^5 y^2+4 t^6 y^2-t^6 y^3 \\ \hline 3 & t^4+11 t^5+33 t^6+42
t^7+26 t^8+8 t^9+t^{10}-6 t^5 y-55 t^6 y \\ & -132 t^7 y-126 t^8 y-52
t^9 y-8 t^{10} y+15 t^6 y^2+110 t^7 y^2+198 t^8 y^2 \\ & +126 t^9
y^2+26 t^{10} y^2-20 t^7 y^3-110 t^8 y^3-132 t^9 y^3-42 t^{10} y^3
\\ & +15 t^8 y^4+55 t^9 y^4+33 t^{10} y^4-6 t^9 y^5-11 t^{10}
y^5+t^{10} y^6 \\ \hline
\end{array}
$$
\caption{The first four power series $P_i(t,y)$.}
\label{ppolys}
\end{figure}
For example, for the ascent sequences with a single ascent one can see
that
\begin{eqnarray*}
\frac{P_1(t,y)}{(1-ty)^3} &=& \frac{t^2(1-ty)+t^3}{(1-ty)^3} \;=\;
\frac{t^2}{(1-ty)^2} + \frac{t^3}{(1-ty)^3}\\ &=& \sum_{n \geq 2}
(n-1)y^{n-2}t^n + \sum_{n \geq 3} \binom{n-1}{2}y^{n-3}t^n.
\end{eqnarray*}
Here the first sum accounts for ascent sequences of the form $0^a1^b$
where $a,b \geq 1$ and the second sum accounts for ascent sequences of
the form $0^a1^b0^c$ where $a,b,c \geq 1$.

We can now use Lemma \ref{lem:Grec} and Theorem \ref{thm:G1} to give 
an expression for $G(u,v,y,t)$.  That is, if we define 
$\Delta_0 =1$, then by  Theorem \ref{thm:G1}, we have that
\begin{equation}
G(u,1,y,t) = 1+\sum_{n\geq 0}
\frac{t(1-u)(1-ty)^n(1+t-ty)^n \prod_{i=1}^n \Gamma_i}{\Delta_n \Delta_{n+1}}
\end{equation}
and
\begin{equation}
G(uv,1,y,t) = 1+\sum_{n\geq 0}
\frac{t(1-uv)(1-ty)^n(1+t-ty)^n \prod_{i=1}^n \bar{\Gamma}_i}{\bar{\Delta}_n
\bar{\Delta}_{n+1}}
\end{equation}
where $\bar{\Delta}_0 =1$ and
$\bar{\Delta}_k = (1-ty)^k(1-uv)+uv(1+t-ty)^k$ and
$\bar{\Gamma}_k = (uv(1+t-ty)^k)/{\bar{\Delta}_k}$ for
$k \geq 1$.
Thus we have the following theorem.
\begin{theorem}\label{main}
\begin{eqnarray*}
\lefteqn{G(u,v,y,t)}\\
&=& 1+\frac{t}{(v-1 -t-tyv +ty+tuv)}\bigg( v-1\\
&& - t \sum_{n\geq 0} (1-ty)^n (1+t-ty)^n \left\{  
	\dfrac{(1-u)\prod_{i=1}^n \Gamma_i}{\Delta_n \Delta_{n+1}}
	-
	\dfrac{uv^2(1-uv)\prod_{i=1}^n \bar{\Gamma}_i}{\bar{\Delta}_n \bar{\Delta}_{n+1}}
 \right\}\bigg) .
\end{eqnarray*}
\end{theorem}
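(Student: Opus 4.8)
The plan is to observe that, by the time we reach this statement, the kernel method has already done its essential work. Lemma~\ref{lem:Grec} expresses $H(u,v,y,t)$ in terms of the kernel coefficient $(v-1-t-tyv+ty+tuv)$ together with the two diagonal specializations $H(u,1,y,t)$ and $H(uv,1,y,t)$, both of which are now known explicitly from Theorem~\ref{thm:G1}. So I would simply solve the functional equation of Lemma~\ref{lem:Grec} for $H(u,v,y,t)$ by dividing through by the kernel, substitute the closed forms, and finish using $G=1+H$.

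First I would rearrange (\ref{eq:G1}) into
$$H(u,v,y,t) = \frac{t(v-1) - tH(u,1,y,t) + tuv^2 H(uv,1,y,t)}{v-1-t-tyv+ty+tuv},$$
noting that this division is legitimate as a formal identity: working over the field of rational functions in $u,v,y$, the kernel has constant term $v-1\ne 0$ in its expansion as a power series in $t$, hence is invertible. Adding $1$ and factoring $t$ out of the numerator then yields the prefactor $t/(v-1-t-tyv+ty+tuv)$ and leaves $(v-1) - H(u,1,y,t) + uv^2 H(uv,1,y,t)$ inside the parentheses, matching the outer shape of the claimed formula.

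Next I would insert the formula for $H(u,1,y,t)$ from Theorem~\ref{thm:G1}. Using the convention $\Delta_0=1$ recorded just before the statement, the isolated leading term $t(1-u)/\Delta_1$ becomes the $n=0$ summand, so $H(u,1,y,t) = \sum_{n\ge 0} t(1-u)(1-ty)^n(1+t-ty)^n\big(\prod_{i=1}^n \Gamma_i\big)/(\Delta_n\Delta_{n+1})$. For $H(uv,1,y,t)$ I would substitute $uv$ for $u$ throughout the same formula; since $\Delta_k$ and $\Gamma_k$ are the only $u$-dependent ingredients, this substitution converts them into exactly the barred quantities $\bar\Delta_k = (1-ty)^k(1-uv)+uv(1+t-ty)^k$ and $\bar\Gamma_k = uv(1+t-ty)^k/\bar\Delta_k$ defined immediately before the theorem, while the factors $(1-ty)^n(1+t-ty)^n$, being independent of $u$, are unchanged.

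Finally I would collect the two sums. Each carries a common factor $t(1-ty)^n(1+t-ty)^n$, so pulling out $-t$ combines them into a single sum over $n\ge 0$ whose summand is the bracketed difference $\{(1-u)\prod_{i=1}^n\Gamma_i/(\Delta_n\Delta_{n+1}) - uv^2(1-uv)\prod_{i=1}^n\bar\Gamma_i/(\bar\Delta_n\bar\Delta_{n+1})\}$, exactly as stated. There is no genuine analytic obstacle; the whole argument is formal substitution into an already-solved recurrence. The one place demanding care is the sign-and-coefficient bookkeeping: one must carry the minus sign on the $H(u,1,y,t)$ term, confirm that the external factor $uv^2$ is correctly absorbed into the barred summand, and check that the $\Delta_0=1$ convention reproduces the $n=0$ contributions on both the barred and unbarred sides without an off-by-one error.
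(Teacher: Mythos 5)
Your proposal is correct and is precisely the paper's own route: the paper derives Theorem~\ref{main} by setting $\Delta_0=1$, rewriting $G(u,1,y,t)$ and $G(uv,1,y,t)$ as single sums over $n\geq 0$ via Theorem~\ref{thm:G1} (with the barred quantities obtained by the substitution $u\mapsto uv$), and then solving the functional equation of Lemma~\ref{lem:Grec} for $H(u,v,y,t)$ by dividing by the kernel. Your extra remark justifying the division (the kernel's $t$-constant term is $v-1$, invertible over the rational functions) is a point the paper leaves implicit, but it changes nothing of substance.
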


The first few terms of this power series are as follows:
\begin{eqnarray*}
G(u,v,y,t) &=& 1+t+(u v+y) t^2+\left(u+u^2 v^2+2 u v y+y^2\right)
t^3\\ &&+\left(u^2+2 u^2 v+u^2 v^2+u^3 v^3+3 u y+3 u^2 v^2 y+3 u v
y^2+y^3\right) t^4 \\
&&+O(t^5). 
\end{eqnarray*}

\subsection{Enumeration of primitive ascent sequences by ascents.}
Primitive ascent sequences, that is, ascent sequences with no 2-runs,
correspond to setting $y=0$ in $G(u,1,y,t)$.
When $y=0$, the expression $\Delta_k$ becomes
$
(1-u) + u(1+t)^k
$
and $\Gamma_k$ becomes
$
{u(1+t)^k}/{\delta_k}.
$
Thus we have the following;
\begin{corollary} Let $\delta_k = (1-u) + u(1+t)^k$ and $\gamma_k = u(1+t)^k/\delta_k$.
Then
\begin{equation}\label{finalG(u,1,0,t)}
G(u,1,0,t) = 1+ \frac{t(1-u)}{\delta_1} + \sum_{n=1}^{\infty}
\frac{t(1-u)(1+t)^n \prod_{i=1}^n \gamma_i}{\delta_n\delta_{n+1}}.
\end{equation}
\end{corollary}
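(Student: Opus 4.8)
The plan is to obtain this corollary as the immediate specialization $y=0$ of the formula already established in Theorem~\ref{thm:G1}. Conceptually, primitive ascent sequences are exactly those with no equal pairs, so $\adjpairs=0$, and extracting the $y^0$ part of $G(u,v,y,t)=\sum_{s}u^{\asc(s)}v^{\last(s)}y^{\adjpairs(s)}t^{|s|}$ (equivalently setting $y=0$) retains precisely those sequences. Since the surrounding text already identifies $G(u,1,0,t)$ with the primitive case, the content of the corollary is purely the algebraic substitution $y=0$. First I would note that the identity of Theorem~\ref{thm:G1} is an identity of formal power series in $u,y,t$, each of whose terms is a rational expression with a denominator of nonzero constant term, so the substitution $y=0$ is legitimate and can be carried out termwise.

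Next I would track how each building block collapses. Setting $y=0$ turns $\Delta_k=(1-ty)^k(1-u)+u(1+t-ty)^k$ into $(1-u)+u(1+t)^k$, which is exactly $\delta_k$, and turns $\Gamma_k=u(1+t-ty)^k/\Delta_k$ into $u(1+t)^k/\delta_k=\gamma_k$. The two prefactors reduce as $(1-ty)^n\mapsto 1$ and $(1+t-ty)^n\mapsto(1+t)^n$. Substituting these into
$$G(u,1,y,t)=1+\frac{t(1-u)}{\Delta_1}+\sum_{n=1}^{\infty}\frac{t(1-u)(1-ty)^n(1+t-ty)^n\prod_{i=1}^n\Gamma_i}{\Delta_n\Delta_{n+1}}$$
yields exactly the asserted expression for $G(u,1,0,t)$.

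The only point requiring care—and the nearest thing to an obstacle, though a minor one—is confirming that the sum still converges as a formal power series in $u$ after the substitution, so that the infinite sum remains well defined. This holds because $\delta_k=(1-u)+u(1+t)^k$, viewed as a series in $u$, has constant term $1$, so $1/\delta_k$ is well defined and $\gamma_k=u(1+t)^k/\delta_k$ retains a factor of $u$. Consequently $\prod_{i=1}^n\gamma_i$ is divisible by $u^n$, each successive summand contributes only at strictly higher order in $u$, and the sum is a bona fide formal power series, exactly as in the proof of Theorem~\ref{thm:G1}.
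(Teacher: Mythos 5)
Your proposal is correct and matches the paper's own (very brief) justification: the corollary is obtained simply by setting $y=0$ in Theorem~\ref{thm:G1}, under which $\Delta_k$ becomes $\delta_k$, $\Gamma_k$ becomes $\gamma_k$, and $(1-ty)^n(1+t-ty)^n$ becomes $(1+t)^n$. Your extra remarks on the legitimacy of the substitution and the formal convergence in $u$ are sound but not needed beyond what the paper already assumes.
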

Unfortunately we cannot derive a generating function for the number
of primitive ascent sequences from $G(u,1,0,t)$ by setting $u=1$
(this generating function is derived in Section~\ref{conj-solved}
using different arguments).  The power series for the
first few terms in the expansion of $G(u,1,0,t)$ (about $u=0$),
\begin{equation}
G(u,1,0,t) = 1+\sum_{n=0}^4 q_n(t)u^n + O(u^5)
\end{equation}
are given in Figure~\ref{qpolys}.
\begin{figure}
$$\begin{array}{|l|c|} \hline i & q_i(t) \\ \hline \hline 0 & t
    \\ \hline 1 & t^2+t^3 \\ \hline 2 & t^3+4 t^4+4 t^5+t^6 \\ \hline
    3 & t^4+11 t^5+33 t^6+42 t^7+26 t^8+8 t^9+t^{10} \\ \hline 4 &
    t^5+26 t^6+171 t^7+507 t^8+840 t^9+865 t^{10}+584 t^{11}+262
    t^{12}+76 t^{13} \\ & +13 t^{14}+t^{15} \\ \hline
 5 & t^6+57
    t^7+718 t^8+4017 t^9+12866 t^{10}+26831 t^{11}+39268 t^{12} \\ & +
    42211 t^{13} +34221 t^{14}+21184 t^{15}+10015 t^{16}+3571
    t^{17}+933 t^{18}\\ & +169 t^{19}+19 t^{20}+t^{21} \\ \hline 
\end{array}
$$
\caption{The first six power series $q_i(t)$.}
\label{qpolys}
\end{figure}
Note that the power series $q_n(t)$ are unimodal for $0\leq n \leq 7$.
It would be nice to have a combinatorial proof of this for general $n$.

\section{Permutations and matchings corresponding to ascent sequences
  with bounded run-length}\label{last_sec}

We conclude by mentioning the restricted permutations and matchings
that correspond, via the maps in \cite{BCDK,dp}, to ascent sequences
with bounded run-length.  First,
we recall a few definitions from the papers \cite{BCDK,cdk}.

Let $V=\{v_1,v_2,\dots,v_n\}$ with $v_1<v_2<\dots<v_n$ be any finite
subset of $\NN$.  The \emph{standardization} of a permutation $\pi$ of
the elements of $V$ is the permutation $\st(\pi)$ of $\{1,\ldots ,n\}$
obtained from $\pi$ by replacing the letter $v_i$ with the letter $i$.
As an example, $\st(39685) = 15342$.  Let
\begin{equation*}
\R_{n} = \{\,\pi_1\dots\pi_n \in \sym_n : \text{ if
$\st(\pi_i\pi_j\pi_k)=231$ then $j\neq i+1$ or $\pi_i\neq \pi_k+1$}
\,\},
\end{equation*}
where $\sym_n$ is the set of permutations of $\{1,2,\ldots,n\}$.

In other words, $\R_n$ is the set of permutations of $[n]$ where, in
each occurrence of the pattern 231, either the letters corresponding
to the 2 and the 3 are nonadjacent, or else the letters corresponding
to the 2 and the 1 are not adjacent in value.  For instance, the
occurrence 463 in $\pi=546123$ violates both conditions, since 4 and 6
are adjacent letters in $\pi$ and 4 and 3 are adjacent values.  Note
that both $\R_n$ and $\T_n$ are defined in terms of avoidance of
\emph{bivincular patterns}, which were defined in~\cite{BCDK}.

Also, let $\T_n$ be the subset of $\R_n$ whose permutations have no
adjacent letters that are adjacent in value and in decreasing order,
that is, no descent consisting of letters that differ in size by one.
In the permutation $546123$, mentioned above, there is one violation
of that condition, namely the 54.

Let $\R_{n}^{(k)}$ be the subset of permutations $\pi\in\R_n$ such
that there do not exist integers $i$ and $m$ with $\pi_i=m$,
$\pi_{i+1}=m-1$, $\ldots$, $\pi_{i+k}=m-k$.  Also, for any general
pattern $p$, let $p(\pi)$ be the number of occurrences of $p$ in
$\pi$.

A \emph{matching} of the set $[2n]=\{1,2,\ldots,2n\}$ is a partition
of $[2n]$ into subsets of size 2, each of which is called an
\emph{arc}.  The smaller number in an arc is its {\em opener} and the
larger one its {\em closer}.  A matching is said to be
\emph{Stoimenow} if it has no pair of arcs $\{a,b\}$ and $\{c,d\}$,
with $a<b$ and $c<d$, satisfying one (or both) of the following
conditions:
\begin{enumerate}
\item $a=c+1$ and $b<d$,
\item $a<c$ and $b=d+1$.
\end{enumerate}
In other words, a Stoimenow matching has no pair of arcs such that one
is nested within the other and the openers, or closers, of the two
arcs differ by 1.

Let $\Matchings_n$ denote the set of Stoimenow matchings on $[2n]$ and
$\Matchings$ the set of all such matchings.
If $(i,j)$ and $(i+1,j+1)$ are arcs in a matching $M$, we say that
they are {\emph{similar}}.  Let $\echords(M)$ be the minimum number of
arcs in $M$ one has to remove to obtain a matching without similar
arcs.  Let $\Z_n^{(k)}$ be the collection of matchings
$M\in\Matchings_n$ such that for no pair $i$ and $j$ do all of
$(i,j),(i+1,j+1),\ldots , (i+k,j+k)$ belong to $M$.

Bijections $\Lambda: \R_n \to \Aseqs{n}$ and $\Psi': \Matchings_n \to \Aseqs{n}$
were presented in \cite[Thm. 1]{BCDK} and \cite[Thm. 7]{cdk}, respectively.
Let us write $\Phip$ and $\Psip$ for their respective inverses, so that we
have $\Phip: \Aseqs{n} \to \R_{n}$ and $\Psip: \Aseqs{n} \to \Matchings_n$.
It is then fairly easy to prove the following theorem and corollary,
and we omit these proofs.

\begin{theorem}\label{match}
We have 
\begin{enumerate}
\item[(i)]
$\Phip (\PAseqs{n}{k}) = \R_{n}^{(k)}$, and 
\item[(ii)] $\Psip (\PAseqs{n}{k}) = \Z_n^{(k)}$.  
\end{enumerate}
In particular, 
\begin{enumerate}
\item[(iii)] $\Phip (\PAseqs{n}{1})
= \T_{n}$, and
\item[(iv)] $\Psip (\PAseqs{n}{1}) =  \{ M \in \Matchings_n : \echords(M)=0\}$.
\end{enumerate}
\end{theorem}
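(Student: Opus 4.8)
The plan is to deduce all four parts from a single property of the bijections $\Lambda$ of \cite{BCDK} and $\Psi'$ of \cite{cdk}: each converts the run-length statistic on an ascent sequence into a corresponding maximal-chain-length statistic on the associated permutation or matching. Since $\Phip=\Lambda^{-1}$ and $\Psip=(\Psi')^{-1}$ are bijections from $\Aseqs{n}$ onto $\R_n$ and $\Matchings_n$ respectively, it suffices in each case to prove the pointwise equivalence $x\in\PAseqs{n}{k}\iff\Phip(x)\in\R_n^{(k)}$ (resp. $\Psip(x)\in\Z_n^{(k)}$); the set equalities in (i) and (ii) then follow. These equivalences will come from the stronger claim that, under $\Phip$, the maximal runs of $x$ correspond, with preservation of length, to the maximal descending runs of consecutive values in $\Phip(x)$, where a descending run is a block $\pi_j=m,\ \pi_{j+1}=m-1,\ \dots,\ \pi_{j+\ell-1}=m-\ell+1$. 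Granting this, $x$ has all runs of length at most $k$ if and only if $\Phip(x)$ has no descending run of $k+1$ consecutive values, which is exactly the defining condition of $\R_n^{(k)}$; hence (i). The matching case is identical, a maximal run of length $\ell$ corresponding to a maximal chain $(i',j'),(i'+1,j'+1),\dots,(i'+\ell-1,j'+\ell-1)$ of similar arcs, whose forbidden length $k+1$ defines $\Z_n^{(k)}$; hence (ii). Parts (iii) and (iv) are the specialisations $k=1$, using that a primitive ascent sequence is one all of whose runs have length $1$ and that, by comparing definitions, $\R_n^{(1)}=\T_n$ and $\Z_n^{(1)}=\{M\in\Matchings_n:\echords(M)=0\}$.

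It remains to establish the two run-length correspondences. For $\Phip$ I would argue by induction on $|x|$ using the step-by-step description of $\Lambda$ from \cite[Thm.~1]{BCDK}, in which the permutation is grown one letter at a time as $x_1,x_2,\dots$ is read. The single-step fact to isolate is that appending a letter equal to its predecessor, $x_i=x_{i-1}$, inserts the new letter immediately after and one lower in value than the letter inserted at step $i-1$, whereas appending a letter with $x_i\neq x_{i-1}$ does not. Granting this, a maximal run $x_i=\cdots=x_{i+\ell-1}$ produces a block of $\ell$ positions carrying $\ell$ consecutive values in strictly decreasing order, and the changes at its two ends ($x_{i-1}\neq x_i$ and $x_{i+\ell-1}\neq x_{i+\ell}$, when these exist) prevent the block from extending further; this is precisely the claimed maximal descending run. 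In particular an equal pair corresponds to a descent of letters differing by one in value, recovering the statistic identity implicit in \cite{BCDK}.

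For $\Psip$ the argument is the same in spirit, applied to the bijection $\Psi'$ of \cite[Thm.~7]{cdk}. Reading $x_i$ adds an arc to the Stoimenow matching, and the step-by-step rule to isolate is that $x_i=x_{i-1}$ creates an arc equal to the arc added at the previous step shifted by one in both its opener and its closer, that is, a similar arc, while $x_i\neq x_{i-1}$ does not. Hence a maximal run of length $\ell$ is carried to a maximal chain of $\ell$ similar arcs, and the equivalence $x\in\PAseqs{n}{k}\iff\Psip(x)\in\Z_n^{(k)}$ follows exactly as above.

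The main obstacle is the locality of these single-step effects: one must check that extending a run lengthens exactly one maximal descending run of consecutive values (resp. one chain of similar arcs) by one and disturbs no other, so that whole runs are carried to whole chains of matching length, rather than merely the total count of equal pairs being preserved. The aggregate count statement is already essentially contained in \cite{BCDK}; what needs to be extracted from the explicit insertion and arc-addition rules is that runs neither fragment nor merge under $\Lambda$ and $\Psi'$. Once the effect of the step $x_i=x_{i-1}$ is pinned down from those rules, the induction is routine, which is why the authors describe these proofs as easy and omit them.
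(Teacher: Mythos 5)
The paper offers no proof of this theorem --- it is explicitly omitted as ``fairly easy'' --- so there is nothing to compare against line by line; your proposal supplies exactly the argument the authors evidently had in mind (track what a single step $x_i=x_{i-1}$ does under the insertion descriptions of $\Lambda^{-1}$ and $(\Psi')^{-1}$, and show that maximal runs go to maximal descending blocks of consecutive values, respectively maximal chains of similar arcs), and the reduction of (i)--(iv) to that single-step analysis is correct, including the observations that $\R_n^{(1)}=\T_n$ and $\Z_n^{(1)}=\{M:\echords(M)=0\}$ by definition. Two small points. First, a directional slip: at step $i$ the newly inserted letter is $i$, the current maximum, so when $x_i=x_{i-1}$ it is placed immediately \emph{to the left of} and one \emph{higher} in value than the letter $i-1$, producing the descent $i,(i-1)$; your phrasing (``immediately after and one lower in value'') has this backwards, though the conclusion you draw --- a descending block of consecutive values --- is the right one. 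Second, the ``locality'' issue you flag as the main obstacle has a clean resolution worth stating: a later letter $m$ can never be inserted between two adjacent letters $j+1,j$ of such a block, because the resulting factor $(j+1)\,m\,\cdots\,j$ is precisely the forbidden bivincular occurrence of $231$ defining $\R_n$ (so that site is inactive), and symmetrically the Stoimenow conditions forbid splitting a pair of similar arcs; nor can a block be extended at either end, since all letters inserted later exceed every letter of the block while extension would require the value $i-1$ or the value $i+\ell$ adjacent in the correct position, the latter being excluded by maximality of the run. With those two points supplied, your induction goes through and the proof is complete.
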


For a permutation $\pi$, let $\adjdes(\pi)$ be the number of descents
in $\pi$ whose letters differ by one in size.  For instance,
$\adjdes(2543176)=3$, accounted for by 54, 43 and 76.

\begin{corollary}
Given $x \in \Aseqs{}$, we have $\adjpairs(x) = \echords(\Psip(x)) =
\adjdes(\Phip(x))$.
\end{corollary}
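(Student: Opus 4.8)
The plan is to reduce all three statistics to the form ``size minus number of blocks'' and to check that the three block-counts coincide under the bijections. Write $x=(b_1^{m_1},\dots,b_r^{m_r})$ for the unique decomposition of $x$ into maximal runs of equal consecutive letters, as in the proof of Theorem~\ref{count}, so that $r$ is the number of runs and $\adjpairs(x)=\sum_{i=1}^{r}(m_i-1)=n-r$, where $n=|x|$. Since $\Phip$ and $\Psip$ preserve size, it suffices to prove that $\adjdes(\Phip(x))=n-r$ and $\echords(\Psip(x))=n-r$; equivalently, that the number of suitable blocks in $\Phip(x)$ and in $\Psip(x)$ both equal $r$. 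Note that Theorem~\ref{match} already matches the \emph{maximal} block length on each side with the maximal run length of $x$, but the present corollary concerns the \emph{total} count, so a finer, run-by-run analysis is needed.

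First I would treat the permutation statistic. Grouping the positions of $\pi=\Phip(x)$ into maximal blocks on which consecutive values decrease by exactly one (maximal chains $\pi_s=m,\pi_{s+1}=m-1,\dots$), a block on $\ell$ letters contributes exactly $\ell-1$ to $\adjdes(\pi)$, so $\adjdes(\pi)=n-s(\pi)$, where $s(\pi)$ is the number of such blocks. The task is then to show $s(\Phip(x))=r$, which I would establish by induction on $n$ using the step-by-step description of the bijection $\Lambda\colon\R_n\to\Aseqs{n}$ of \cite{BCDK}: appending to $x$ a letter that creates an equal pair corresponds to extending one of these decreasing-by-one blocks of $\pi$ by a single letter (leaving $s$ unchanged), whereas appending a letter that does not create an equal pair starts a new block (increasing $s$ by one). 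Comparing this with the growth of $r$ under the same operation gives $s(\Phip(x))=r$ and hence $\adjdes(\Phip(x))=\adjpairs(x)$.

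For the matching statistic the first task is to evaluate $\echords$ intrinsically. Since each arc $(i,j)$ has at most one similar arc of the form $(i+1,j+1)$ and at most one of the form $(i-1,j-1)$, the arcs of $M$ partition into disjoint maximal \emph{chains} $(i,j),(i+1,j+1),\dots,(i+\ell-1,j+\ell-1)$; let $c(M)$ be their number. I claim $\echords(M)=|M|-c(M)$. The delicate point, which I expect to be the main obstacle, is that deleting an arc is followed by standardization of the remaining points, and this can \emph{recreate} similarities: a direct computation shows that after deleting a proper nonempty subset of a chain and standardizing, the surviving arcs of that chain still contain a similar pair, forcing one to delete down to a single arc per chain and giving $\echords(M)\ge|M|-c(M)$. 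For the reverse inequality one retains a single arc from each chain; here one must rule out the possibility that standardization turns arcs from \emph{different} chains into similar arcs, and I would do this by exploiting the special structure of the matchings in the image $\Psip(\Aseqs{})$, for which a one-arc-per-chain selection can be shown to be similarity-free.

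With the formula $\echords(M)=|M|-c(M)$ established, it remains to prove $c(\Psip(x))=r$, which I would again obtain by induction, using the recursive description of the bijection $\Psi'\colon\Matchings_n\to\Aseqs{n}$ of \cite{cdk}: appending a letter that creates an equal pair lengthens a single chain of $\Psip(x)$ by one arc and leaves $c$ unchanged, while any other appended letter introduces a new singleton chain. Combining $c(\Psip(x))=r$ with $|\Psip(x)|=n$ yields $\echords(\Psip(x))=n-r=\adjpairs(x)$, completing the chain of equalities. In the final write-up I would phrase the two inductions in parallel, both anchored on the common run-decomposition $x=(b_1^{m_1},\dots,b_r^{m_r})$, so that the identities $s(\Phip(x))=c(\Psip(x))=r$ are proved simultaneously.
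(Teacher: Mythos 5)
The paper offers no proof to compare against: this corollary is stated right after Theorem~\ref{match} with the remark that both results are ``fairly easy to prove'' and the proofs are omitted. Judged on its own merits, your plan is correct in structure and, importantly, isolates the one genuinely delicate point: $\echords$ must be computed with standardization after removal. Without standardization a chain of $\ell$ mutually consecutive similar arcs could be broken by deleting only $\lfloor \ell/2\rfloor$ of them, and the corollary would already fail for $x=(0,0,0)$, whose image is the chain $(1,4),(2,5),(3,6)$ with $\adjpairs(x)=2$. Your lower bound is right and can be made fully local: if two arcs of one chain survive with all intermediate chain arcs deleted, the deleted points between their openers (respectively closers) are exactly the openers (respectively closers) of those intermediate arcs, so the survivors become similar after standardization. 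For the upper bound you do not in fact need any special structure of $\Psip(\Aseqs{})$: for an \emph{arbitrary} matching, keeping one arc per chain creates no cross-chain similarity, since if $(i_s,j_s)$ and $(i_t,j_t)$ are both kept, $i_s<i_t$, and every point strictly between $i_s$ and $i_t$ and between $j_s$ and $j_t$ is deleted, then an easy induction shows those points must be the openers and closers of the successive arcs of the chain containing $(i_s,j_s)$, which forces $(i_t,j_t)$ into that same chain, a contradiction. What remains genuinely unfinished are the two inductions $s(\Phip(x))=c(\Psip(x))=r$, which you only sketch. On the permutation side the facts you need to record are: (i) no active site of $\pi\in\R_n$ lies strictly inside an adjacent descent, because inserting $n+1$ there creates a forbidden occurrence of the bivincular pattern (so existing blocks are never split); (ii) the unique insertion creating a new adjacent descent is the one immediately to the left of $n$, and its label is the last letter of $\Lambda(\pi)$, so a new adjacent descent appears exactly when a new equal pair does. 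An analogous local check for the bijection of \cite{cdk} (that adding an arc extends an existing chain precisely when the appended letter repeats the previous one, and never merges or splits chains) completes the matching side. With those verifications written out, your argument is a complete and correct proof.
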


\begin{example}\label{ex-match}
Given the ascent sequence $x=(0,1,1,0,2,0,1)$ the
corresponding permutation is $\Phip(x)= 6417325$ and the corresponding
matching is $\Psip(x)$:
$$
\scalebox{0.75}{\includegraphics{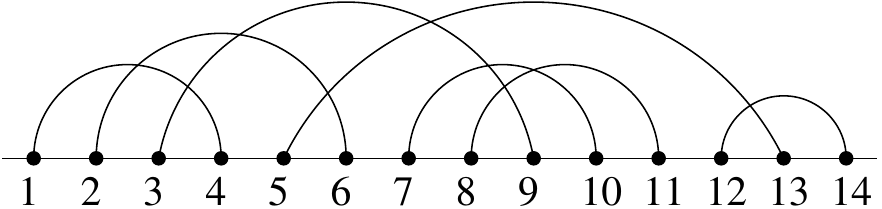}}
$$ Note that $\adjdes(6417325)=1$ because we have the two adjacent
entries $\pi_5\pi_{6}=32$.  Also $\echords(\Psip(x))=1$ since we have
one pair of similar arcs in $\Psip(x)$, namely $(7,10), (8,11)$. 
\end{example}

For a matching $M\in\Matchings_n$ let $|M|=n$ and, for $n\geq 1$, let
$A^*$ denote the arc in $M$ having the rightmost closer.  Let
$\cruns(M)$ be the number of runs of closers to the left of $A^*$.
Moreover, let $\larcs(M)$ be the number of runs of closers to the left
of the arc having the closer next to the right of A*'s opener.  For
the matching $M$ in Example~\ref{ex-match}, $|M|=7$, $A^*=(12,14)$,
$\cruns(M)=3$ (the runs of closers are 4, 6, 9(10)(11)), and
$\larcs(M)=1$ (there is one run of closers, 4, to the left of (5,13)).

Given $\pi \in \R_n$, let us label the positions of $\pi$ from left to right 
where we can insert $(n+1)$ in order to create $\pi' \in \R_{n+1}$.
Define $b(\pi)$ to be the label immediately to the left of $n$ in $\pi$.
For example, if $\pi = 6132547 \in \R_7$, then $\pi$ is labeled as
$_0 61 _1 32 _2 54 _3 7 _4$ and $b(\pi)=3$ since 3 is the label immediately
to the left of 7.

Let $\R=\bigcup_{n\ge0}\R_n$.  
Using the properties of the corresponding
bijections in \cite{BCDK} and~\cite{cdk},
\begin{eqnarray*}
G(u,v,y,t) &=& \sum_{\pi \in \R} u^{\asc(\pi^{-1})} v^{b(\pi)}
y^{\adjdes(\pi)} t^{|\pi|}\\
&=& \sum_{M \in \Matchings} u^{\cruns(M)}
v^{\larcs(M)} y^{\echords (M)} t^{|M|}.
\end{eqnarray*}
Thus, Theorem~\ref{main}
provides the generating function for the number of permutations and
matchings in question subject to 3 statistics.

Finally, as a corollary to Theorems~\ref{bcor} and~\ref{match}, we have the following enumerative result.

\begin{theorem}
$\ds \sum_{n\geq 0}|\R_{n}^{(k)}|x^n=\sum_{n\geq 0} |\Z_n^{(k)}|x^n=\sum_{n\geq 0}
\prod_{i=1}^n \left(1-\left( \dfrac{1-x}{1-x^k}
  \right)^i\right).$
\end{theorem}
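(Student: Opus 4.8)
The plan is to deduce the statement directly from Theorem~\ref{match} and Theorem~\ref{bcor}, without any new generating-function manipulation; indeed, the result is of the form of a corollary to those two theorems, so the work is almost entirely a matter of assembling them.

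First I would extract the enumerative content of Theorem~\ref{match}. The maps $\Phip\colon\Aseqs{n}\to\R_n$ and $\Psip\colon\Aseqs{n}\to\Matchings_n$ are bijections, being by definition the inverses of the bijections $\Lambda$ and $\Psi'$ of \cite{BCDK} and \cite{cdk}. A bijection restricts to a bijection between any subset of its domain and the image of that subset, so parts (i) and (ii) of Theorem~\ref{match} give bijections $\PAseqs{n}{k}\to\R_n^{(k)}$ and $\PAseqs{n}{k}\to\Z_n^{(k)}$. Since $\Phip$ and $\Psip$ preserve size, that is $|\Phip(x)|=|x|=|\Psip(x)|$, these restrictions are size-preserving, whence
\[
|\R_n^{(k)}|=|\PAseqs{n}{k}|=|\Z_n^{(k)}|
\]
for every $n\geq 0$. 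Summing against $x^n$ shows that the three generating functions $\sum_n|\R_n^{(k)}|x^n$, $\sum_n|\PAseqs{n}{k}|x^n$ and $\sum_n|\Z_n^{(k)}|x^n$ all coincide.

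Second, I would invoke Theorem~\ref{bcor}, which already evaluates the middle series $\sum_n|\PAseqs{n}{k}|x^n$ in closed form as a sum of products of the required shape. Substituting that evaluation into the chain of equalities above completes the argument; all of the analytic work, namely the passage from primitive ascent sequences to those whose runs have length at most $k$ via the weight $x+x^2+\cdots+x^k$ for each run, was already carried out inside the proof of Theorem~\ref{bcor}.

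There is essentially no substantive obstacle here: the proof is a short appeal to two established results. The only point demanding genuine care is the bookkeeping of the run-length parameter, namely checking that the forbidden configurations defining $\R_n^{(k)}$ and $\Z_n^{(k)}$ (a chain of $k+1$ consecutive positions carrying consecutive decreasing values $m,m-1,\dots,m-k$, respectively $k+1$ similar arcs $(i,j),(i+1,j+1),\dots,(i+k,j+k)$) correspond, under $\Phip$ and $\Psip$, exactly to the ascent sequences of $\PAseqs{n}{k}$ whose runs have length at most $k$. This matching is precisely the content of Theorem~\ref{match}, so the remaining task is to ensure that the index $k$ is aligned consistently on both sides with the exponent appearing in the denominator of the product on the right-hand side.
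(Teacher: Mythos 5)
Your proposal follows exactly the route the paper takes: the result is stated there as an immediate corollary of Theorems~\ref{match} and~\ref{bcor}, with precisely the assembly you describe (the size-preserving bijections $\Phip$ and $\Psip$ restrict to bijections $\PAseqs{n}{k}\to\R_n^{(k)}$ and $\PAseqs{n}{k}\to\Z_n^{(k)}$, so all three counting sequences agree, and Theorem~\ref{bcor} evaluates the middle one). The one point you flag at the end --- aligning the index $k$ with the exponent in the denominator --- is in fact the only substantive issue, and you should push it through rather than leave it as a remark: carrying out your own argument literally gives the product with $1-x^{k+1}$ in the denominator, since that is what Theorem~\ref{bcor} yields for $\PAseqs{n}{k}$ (consistent with the case $k=1$, where $\frac{1-x}{1-x^2}=\frac{1}{1+x}$ recovers Theorem~\ref{count}), whereas the statement as printed has $1-x^k$. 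The printed form cannot be right as it stands, because at $k=1$ it would make every factor vanish and give the constant series $1$; so your derivation actually proves the (corrected) identity with $1-x^{k+1}$, and the discrepancy is an inconsistency internal to the paper rather than a gap in your reasoning.
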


\end{document}